\newcommand {\SL} {{\mathbb L}}
\newcommand {\SR} {{\mathbb R}}
\newcommand {\SRd} {{\SR^d}}
\renewcommand {\phi} {{\varphi}}
\newcommand {\al} {{\alpha}}
\newcommand {\dt} {{\delta}}
\newcommand {\Dt} {{\Delta}}
\newcommand {\e} {{\varepsilon}}
\newcommand {\ga} {{\gamma}}
\newcommand {\Ga} {{\Gamma}}
\newcommand {\la} {{\lambda}}
\newcommand{\be}{\mathbf e}
\newcommand{\cA}{\mathcal A}
\newcommand{\cI}{\mathcal I}
\newcommand{\cO}{\mathcal O}
\newcommand{\cD}{\mathcal D}
\newcommand{\cS}{\mathcal S}
\newcommand{\cJ}{\mathcal J}
\newcommand{\cK}{\mathcal K}
\newcommand {\tf} {{\tilde f}}
\newcommand{\fc}{\mathfrak c}
\newcommand{\fa}{\mathfrak a}
\def\Dom{\mathop{\rm Dom}}
\def\sign{\mathop{\rm sign\,}}
\newcommand {\Span} {\mbox{\rm span}\,}
\numberwithin{equation}{section}
\newtheorem{theorem}{Theorem}[section]
\newtheorem{lemma}[theorem]{Lemma}
\newtheorem{defi}[theorem]{Definition}
\newtheorem{Remark}[theorem]{Remark}
\newtheorem{proposition}[theorem]{Proposition}
\newtheorem{example}[theorem]{Example}
\newcommand {\Proofof}[1] {\noindent{\bf P{\footnotesize\bf ROOF} of {#1}: } \ }
\newcommand {\ProofEnd} {
             \begin{flushright} \vskip -0.2in $\Box$ \end{flushright}}
\newcommand{\Ba}[1]{\begin{array}{#1}}
\newcommand{\Ea}{\end{array}}
\newcommand{\Be}{\begin{equation}}
\newcommand{\Ee}{\end{equation}}
\newcommand{\Bea}{\begin{eqnarray}}
\newcommand{\Eea}{\end{eqnarray}}
\newcommand{\Beas}{\begin{eqnarray*}}
\newcommand{\Eeas}{\end{eqnarray*}}
\newcommand{\Benu}{\begin{enumerate}}
\newcommand{\Eenu}{\end{enumerate}}
\newcommand{\Bi}{\begin{itemize}}
\newcommand{\Ei}{\end{itemize}}
\newcommand{\BR}{\begin{Remark} \em}
\newcommand{\ER}{\end{Remark}}
\newcommand{\BE}{\begin{example} \em}
\newcommand{\EE}{\end{example}}
\newcommand {\Ds} {\displaystyle}
\newcommand {\mand} {{\quad\mbox{and}\quad}}
\renewcommand{\th}{{\operatorname{th}\,}}
\newcommand{\sh}{{\operatorname{sh}\,}}
\newcommand{\Da}{\cD^{\al}}
\newcommand{\Das}{\cD_{\rm st}^{\al}}
\newcommand{\Dal}{\cD^{\al}(x_0)}
\newcommand{\Dsig}{\cD^{2\sigma}(x_0)}
\newcommand{\Dsx}{\cD^{2\sigma}_{\rm st}(x)}
\newcommand{\Dx}{\cD^{2\sigma}(x)}
\newcommand{\Dals}{\cD_{\rm st}^{\al}(x_0)}
\newcommand{\Dsigs}{\cD_{\rm st}^{2\sigma}(x_0)}
\newcommand{\Lip}{{\rm Lip}}
\newcommand{\sline}{{\smallskip

\noindent}}
\begin{document}

\title[Pointwise convergence of fractional powers]{Pointwise convergence of fractional powers of Hermite type operators}
\author[Flores, Garrig\'os, Signes,  Viviani]{G. Flores, G. Garrig\'os,  T. Signes,  B. Viviani}

\address{G. Flores, CIEM-CONICET, FaMAF, Universidad Nacional de C\'ordoba, Av. Medina Allende s/n, 
Ciudad Universitaria, CP:X5000HUA C\'ordoba, Argentina}
\email{guilleflores@unc.edu.ar}

\address{G. Garrig\'os and T. Signes, Departamento de Matem\'aticas, Universidad de Murcia, 30100, Espinardo, Murcia, Spain}
\email{gustavo.garrigos@um.es, tmsignes@um.es}

\address{Beatriz Viviani\\
IMAL (UNL-CONICET) y FIQ (Universidad Nacional del Litoral), Colectora Ruta Nac. N 168, Paraje El Pozo - 3000 Santa Fe - Argentina} \email{viviani@santafe-conicet.gov.ar}
%\email{beatriz.viviani@gmail.com}

\thanks{G.G. was supported in part by grants MTM2017-83262-C2-2-P and
%PID2019-105599GB-I00
AEI/10.1303/501100011033
from Agencia Estatal de Investigaci\'on (Spain), and grant 20906/PI/18 from Fundaci\'on S\'eneca (Regi\'on de Murcia, Spain).
 }

\date{\today}
\subjclass[2010]{42C10, 35C15, 33C45, 40A10, 35R11.}

\keywords{Fractional Laplacian, Hermite operator, Ornstein-Uhlenbeck semigroup, Poisson integrals. }

\begin{abstract}
When $L$ is the Hermite or the Ornstein-Uhlenbeck operator, we find minimal integrability and 
smoothness conditions on a function $f$ so that the fractional power 
$L^\sigma f(x_0)$ is well-defined at a given point $x_0$. 
We illustrate the optimality of the conditions with various examples. 
Finally, we obtain similar results for the fractional operators $(-\Dt+R)^\sigma$, with $R>0$. 
\end{abstract}

\dedicatory{Dedicated to the memory of Eleonor \emph{Pola} Harboure, whose human qualities \\ and
 professional behavior will always be our guidance}

 \maketitle

\section{Introduction}  \setcounter{equation}{0}

Let $L$ be a positive self-adjoint differential operator densely defined in a Hilbert space $L_2(\Omega,d\mu)$.
%It is well known 
%see e.g. \cite[Ch. III, \S3.4]{Krein72}, 
Fractional powers $L^\sigma$, for $\sigma>0$, can be defined in various 
(abstract) equivalent ways, 
one of the most standard being via spectral theory:
\Be
\langle L^\sigma f,g\rangle\,=\,\int_{\sigma(L)}\la^\sigma\,dE_{f,g},\label{dE}\quad f\in\Dom(L^\sigma),\;\;g\in L_2(\Omega,d\mu),
\Ee
where $E$ denotes a resolution of the identity associated with $L$; see e.g. \cite[Ch 13]{Rud}. When the spectrum is discrete
$\sigma(L)=\{\la_n\}_{n=0}^\infty$, and $\{\phi_n\}_{n=0}^\infty$ is an orthonormal basis of eigenfunctions, then \eqref{dE} takes the form
\[
L^\sigma f\,=\,\sum_{n=0}^\infty\,\la_n^\sigma\,\langle f,\phi_n\rangle\,\phi_n,
\]
say for $f\in\Span\{\phi_n\}$. Alternatively, it is also possible to express $L^\sigma$ in terms of the contraction 
semigroup $\{e^{-tL}\}_{t>0}$. For instance, when $\sigma\in(0,1)$, via the Bochner formula
\Be
L^\sigma f=\tfrac1{\Ga(-\sigma)}\int_0^\infty (e^{-tL}f-f)\,\frac{dt}{t^{1+\sigma}};\label{Ga}
\Ee 
see e.g. \cite[Ch IX.11]{Yos} and references therein.
%see \cite{Ba58}.

When $L$ equals the classical Laplacian $-\Dt$ and $f\in\cS(\SRd)$, both \eqref{dE} and \eqref{Ga} lead to the explicit expression\Be
(-\Dt)^\sigma f(x)\,=\,c_{d,\sigma}\, \mbox{{\small PV}}\int_{\SRd}\frac{f(x)-f(y)}{|x-y|^{d+2\sigma}}\,dy, \quad x\in\SRd,\;\; \sigma\in(0,1),
\label{Dt}\Ee
with a suitable constant $c_{d,\sigma}>0$; see e.g. \cite{Kwa17}. This pointwise formula does actually make sense for a larger class of functions, 
e.g. when $f\in L_1(dy/(1+|y|)^{d+2\sigma})$ and $f$ is H\"older continuous  of order $2\sigma+\e$  near the point $x$.
For general operators $L$, however, explicit expressions such as \eqref{Dt} are not common, and 
the definition of $L^\sigma f$ as in \eqref{dE}-\eqref{Ga} must necessarily be restricted to a suitable 
dense class of functions $f$.

Based on work of Caffarelli and Silvestre \cite{CS}, Stinga and Torrea proposed in \cite{ST} to define $L^\sigma f$ as the 
Neumann boundary value associated with the elliptic PDE
\Be
\left\{\Ba{lll} u_{tt} +\tfrac{1-2\sigma}t\,u_t\,=\,L u, & & t>0%\mbox{ $(t,x)\in(0,\infty)\times\SRd$}
\\
u(0,x)=f(x).&&\\\Ea\right.
\label{pde}\Ee
More precisely, if $\sigma\in(0,1)$ and $\fc_\sigma=2^{2\sigma-1}\Ga(\sigma)/\Ga(1-\sigma)$, they set\Be
L^\sigma f(x)\,=\,-\fc_\sigma\,\lim_{t\to0^+}\,t^{1-2\sigma}\,\partial_tu(t,x),
\label{Lnu}\Ee
 where $u(t,x)$ is the solution of \eqref{pde} given by the Poisson-like integral\Be
u(t,x)=P^\sigma_tf(x):=\frac{(t/2)^{2\sigma}}{\Ga(\sigma)}\,\int_0^\infty e^{-\frac{t^2}{4s}}\,e^{-sL}f(x)\,
\,\frac{ds}{s^{1+\sigma}};\label{pois}
\Ee
see \cite[Theorem 1.1]{ST}. Then the authors specialize to the Hermite operator 
\[
L=-\Dt+|x|^2\quad \mbox{in $L_2(\SRd)$},
\] 
and prove that the \emph{pointwise limit}  in \eqref{Lnu} exists at every $x\in\SRd$, 
and coincides with \eqref{Ga},
whenever $f\in C^2(\SRd)\cap L_1(dy/(1+|y|)^N)$, for some $N>0$; see \cite[Theorem 4.2]{ST}, 
and \cite{ST2,RS16} for slightly less restrictive smoothness assumptions.

\

The purpose of this paper is to show the validity of \eqref{Lnu}  for a wider class of functions $f$, with optimal integrability assumptions and very mild smoothness at a given point $x_0\in\SRd$.
We also consider the slightly more general family of operators
\Be
\label{Hm}
L=-\Dt+|x|^2+m, \quad \mbox{in $L_2(\SR^d)$},
\Ee
with a constant parameter $m\geq -d$ (so that $L$ is positive). For $m=0$ this is the usual Hermite operator, 
while for $m=-d$ it can be transformed (after a change of variables) into the  
Ornstein-Uhlenbeck operator
\[
\cO=-\Dt+2x\cdot\nabla,\quad \mbox{in }\;L_2(\SRd,e^{-|x|^2}\,dx);
%\label{OU}\Ee
\]
see $\S\ref{S_OU}$ below. 

To state our results, we now describe the integrability
and smoothness conditions we shall use below. Given a weight $\Phi(y)\geq0$,  
we say that $f\in L_1(\Phi)$ when 
\Be
\int_{\SR^d}|f(y)|\,\Phi(y)\,dy<\infty.
\label{fphi}
\Ee
Associated with $L=-\Dt+|x|^2+m$ in \eqref{Hm}, and $\sigma\in(0,1)$ we define the weight
\Be
\Phi_\sigma(y)=\left\{\Ba{ll}
\Ds\frac {e^{-|y|^2/2}}{(1+|y|)^{\frac {d+m}2}[\ln(e+|y|)]^{1+\sigma}} & \mbox{ if } m>-d\\
&\\
\Ds\frac {e^{-|y|^2/2}}{[\ln(e+|y|)]^{\sigma}} & \mbox{ if } m=-d.\\
\Ea\right.
\label{phi}
\Ee
Then, $f\in L_1(\Phi_\sigma)$ 
%is the \emph{optimal} integrability condition in $f$ 
ensures that
the Poisson-like integral $P^\sigma_tf(x)$ in \eqref{pois} is a well-defined (smooth) function when $(t,x)\in(0,\infty)\times\SR^d$,
and this condition is actually best possible for that property;
see \cite[Theorem 1.1]{G17} (or \cite{GHSTV, FV22}).

Next, when $\al\in(0,2)$, we shall say that a (locally integrable) function $f$ is \emph{$\al$-smooth at $x_0\in\SR^d$}, denoted $f\in\cD^\al(x_0)$, if it 
satisfies the Dini-type condition
\[
\int_{|h|\leq\dt} \frac{|f(x_0+h)+f(x_0-h)-2f(x_0)|}{|h|^{d+\al}}\,dh\,<\,\infty,\quad \mbox{for some $\dt>0$}.
\]
%\label{dini}\Ee
We say that $f$ is \emph{strictly $\al$-smooth at $x_0$}, denoted $f\in \cD_{\rm st}^\al(x_0)$, if 
\[
f\in\cD^\al(x_0) \mand
\int_{|h|\leq\dt} \frac{|f(x_0+h)-f(x_0-h)|}{|h|^{d+\al-3}}\,dh\,<\,\infty.
%\label{dinisim}\Ee
\]
Observe that this last condition is redundant if $d+\al-3\leq0$; 
in particular, if $d=1$ and $\al\in(0,2)$, or $d=2$ and $\al\in(0,1]$.
In other cases however, the classes are different and we shall need this distinction to obtain our results.
We refer to $\S\ref{S_smooth}$ below for several examples of these notions, 
and their relation with local Lipschitz conditions at $x_0$.
%some of which show the optimality of the smoothness in the theorems that follow.

Our first main result can now be stated as follows.

\begin{theorem}\label{th1}
Let $L$ be the Hermite operator in \eqref{Hm}, $\sigma\in(0,1)$ and $\Phi_\sigma(y)$ as in \eqref{phi}. Suppose that 
\[
f\in L_1(\Phi_\sigma),\mand f\in\Dsigs\quad\mbox{for some $x_0\in\SR^d$}.
\]
Then, the number $L^\sigma f(x_0)$ exists both, 
in the limiting sense of \eqref{Lnu} and as the absolutely convergent integral in \eqref{Ga}, and both definitions agree.
\end{theorem}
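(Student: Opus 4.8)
The plan is to reduce the theorem to two facts: (i) the absolute convergence of the Bochner integral at $x_0$,
\[
\int_0^\infty \frac{\bigl|e^{-sL}f(x_0)-f(x_0)\bigr|}{s^{1+\sigma}}\,ds\,<\,\infty,
\]
and (ii) an exact identity identifying the Poisson--Neumann limit \eqref{Lnu} with \eqref{Ga}. For the large-$s$ range I would use only the hypothesis $f\in L_1(\Phi_\sigma)$: by \cite[Theorem 1.1]{G17} the kernel representation $e^{-sL}f(x_0)=\int_{\SRd}\cK_s(x_0,y)f(y)\,dy$ is absolutely convergent for every $s>0$, and the well-definedness of $P^\sigma_t f(x_0)$ forces $\int_1^\infty s^{-1-\sigma}\,e^{-sL}|f|(x_0)\,ds<\infty$; together with $\int_1^\infty s^{-1-\sigma}\,ds<\infty$ this settles $\int_1^\infty$. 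Thus all the work is in the small-$s$ regime, where the local smoothness of $f$ at $x_0$ must be brought in.

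For $\int_0^1$ I would use Mehler's formula, which for $L=-\Dt+|x|^2+m$ gives, after setting $y=x_0+h$, the factorized form
\[
\cK_s(x_0,x_0+h)=\ga_s\,e^{-\tanh s\,(x_0\cdot h)}\,e^{-\frac12\coth(2s)\,|h|^2},
\]
with $\ga_s=e^{-ms}(2\pi\sinh 2s)^{-d/2}e^{-\tanh s\,|x_0|^2}$ independent of $h$. Writing $e^{-sL}f(x_0)-f(x_0)=\int \cK_s(x_0,x_0+h)[f(x_0+h)-f(x_0)]\,dh+f(x_0)\bigl[e^{-sL}1(x_0)-1\bigr]$ and splitting $|h|\le\dt$ from $|h|>\dt$, the far part decays like $e^{-c/s}$ and the term $e^{-sL}1(x_0)-1=O(s)$ is harmless against $s^{-1-\sigma}$. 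On $|h|\le\dt$ I would split the kernel into its even and odd parts in $h$ (which is exact here, since the only odd dependence sits in $e^{-\tanh s\,(x_0\cdot h)}$): the even part $\ga_s\cosh(\tanh s\,x_0\cdot h)\,e^{-\frac12\coth(2s)|h|^2}$ pairs with the second difference $f(x_0+h)+f(x_0-h)-2f(x_0)$, while the odd part $-\ga_s\sinh(\tanh s\,x_0\cdot h)\,e^{-\frac12\coth(2s)|h|^2}$ pairs with the first difference $f(x_0+h)-f(x_0-h)$. After integrating in $s$ first (Tonelli), the Gaussian moment $\int_0^1 s^{-1-\sigma-d/2}e^{-|h|^2/4s}\,ds\sim |h|^{-d-2\sigma}$ turns the even contribution into exactly the Dini integral defining $\Dsig$; the extra factor $\tanh s\,(x_0\cdot h)=O(s|h|)$ in the odd part gains one power of $s$ and one of $|h|$, producing instead $\int_{|h|\le\dt}|f(x_0+h)-f(x_0-h)|\,|h|^{3-d-2\sigma}\,dh$, which is finite precisely because $f\in\Dsigs$ (and when $d+2\sigma-3\le0$ this term is harmless and the strict condition is vacuous). \emph{This odd term, and the exact matching of the exponent $d+2\sigma-3$ with the definition of $\Dsigs$, is the main obstacle}; it is the whole reason the strict condition, rather than plain $2\sigma$-smoothness, is needed when $d+2\sigma>3$.

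It remains to prove (ii). Differentiating \eqref{pois} gives $t^{1-2\sigma}\partial_t P_t^\sigma f(x_0)=\frac{2^{-2\sigma}}{\Ga(\sigma)}\int_0^\infty \Psi_t(s)\,e^{-sL}f(x_0)\,ds$, where $\Psi_t(s)=e^{-t^2/4s}\bigl(2\sigma-\tfrac{t^2}{2s}\bigr)s^{-1-\sigma}=-2\frac{d}{ds}\bigl[e^{-t^2/4s}s^{-\sigma}\bigr]$. Since both endpoint values of $e^{-t^2/4s}s^{-\sigma}$ vanish, $\int_0^\infty\Psi_t(s)\,ds=0$, so $f(x_0)$ may be subtracted freely to obtain the exact identity
\[
-\fc_\sigma\,t^{1-2\sigma}\partial_t P_t^\sigma f(x_0)=\frac{1}{\Ga(1-\sigma)}\int_0^\infty \frac{d}{ds}\bigl[e^{-t^2/4s}s^{-\sigma}\bigr]\bigl(e^{-sL}f(x_0)-f(x_0)\bigr)\,ds .
\]
The integrand is dominated, uniformly in $t\in(0,1)$, by $C\,s^{-1-\sigma}\bigl|e^{-sL}f(x_0)-f(x_0)\bigr|$, which is integrable by step (i); since $\frac{d}{ds}[e^{-t^2/4s}s^{-\sigma}]\to-\sigma s^{-1-\sigma}$ pointwise as $t\to0^+$, dominated convergence yields the limit $\frac{-\sigma}{\Ga(1-\sigma)}\int_0^\infty(e^{-sL}f(x_0)-f(x_0))s^{-1-\sigma}\,ds$. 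Finally $\frac{-\sigma}{\Ga(1-\sigma)}=\frac{1}{\Ga(-\sigma)}$ by $\Ga(1-\sigma)=-\sigma\,\Ga(-\sigma)$, so \eqref{Lnu} equals \eqref{Ga} and both coincide with the absolutely convergent number produced in step (i).
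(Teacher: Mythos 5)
Your proposal is correct and, at its core, it is the same proof as the paper's: your step (ii) is exactly Proposition \ref{mainPr} (same vanishing--total--mass observation $\int_0^\infty\Psi_t(s)\,ds=0$, same dominated convergence; your derivation of the vanishing via $\Psi_t(s)=-2\frac{d}{ds}[e^{-t^2/4s}s^{-\sigma}]$ is a slightly slicker route than the paper's Gamma-function computation); your large-$s$ reduction is Lemma \ref{L_Ainfty}; and your even/odd splitting of the Mehler kernel against $\triangle^2_hf$ and $\triangle^1_hf$, with the key gain of one factor $s|h|$ in the odd part producing the exponent $(d+2\sigma-3)_+$, is precisely Proposition \ref{Kernel prop.} combined with Lemmas \ref{L_h1} and \ref{L_h2}. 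The two places where you differ from the paper are also the two places where you assert rather than prove, and they are not free. First, you subtract $f(x_0)\,e^{-sL}1(x_0)$ and claim $e^{-sL}1(x_0)-1=O(s)$. This is true (a Gaussian integration of the Mehler kernel gives $e^{-sL}1(x_0)=e^{-sm}(\cosh 2s)^{-d/2}e^{-|x_0|^2(\tanh s+O(s^2))}=1-s(m+|x_0|^2)+O(s^2)$), but it requires that computation; the paper sidesteps it by replacing the constant $1$ with the \emph{exact eigenfunction} $\psi(y)=e^{-|y|^2/2}$, $L\psi=(m+d)\psi$, so that $e^{-sL}\psi-\psi=(e^{-s(m+d)}-1)\psi$ is trivially $O(s)$ (Lemma \ref{L_psi} and \S\ref{S_4.3}); the price is an extra term $J_{12}$ handled like the main one. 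Second, your far-field claim that ``the far part decays like $e^{-c/s}$'' hides the real difficulty: $f$ is not bounded, only $f\in L_1(\Phi_\sigma)$, so $|f(y)|$ may grow like $e^{|y|^2/2}$ modulo powers and logs, and the factor $e^{-\delta^2/4s}$ alone does not make the $y$-integral converge. One must play \emph{both} Gaussian factors of the kernel, $e^{-|x_0-y|^2/4s}$ and $e^{-s|x_0+y|^2/4}$ (in the variable $s=\operatorname{th}t$), against this growth, and the estimate degenerates as $s\uparrow1$ --- which is exactly why the paper restricts to $\operatorname{th}A=1/2$ and proves Lemma \ref{L_h3}, namely $\cK(x_0,y)\lesssim e^{-(\frac12+\gamma)|y|^2}\lesssim\Phi_\sigma(y)$ for $|y|\geq10\max\{|x_0|,1\}$. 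Your pointwise-in-$s$ variant (a bound on $\sup_{|y-x_0|>\delta}h_s(x_0,y)/\Phi_\sigma(y)$) is also true on $(0,A]$ and yields the same conclusion, but it needs essentially the same computation. With these two estimates actually supplied, your argument is a complete and faithful reconstruction of the paper's proof; they are the only genuinely Hermite-specific inputs beyond the near-field kernel bounds you derived explicitly.
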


The optimality of the smoothness condition is discussed in \S6 below. The corresponding version for the Ornstein-Uhlenbeck operator  
takes the following form.

\begin{theorem}\label{th2}
Let $L=\cO=-\Dt+2x\cdot\nabla$  and $\sigma\in(0,1)$. Suppose that 
\[
f\in L_1(e^{-|y|^2}/[\log(e+y)]^\sigma),\mand f\in\Dsigs\quad\mbox{for some $x_0\in\SR^d$}.
\]
Then, the number $\cO^\sigma f(x_0)$ exists both, 
in the limiting sense of \eqref{Lnu} and as the absolutely convergent integral in \eqref{Ga}, and both definitions agree.
\end{theorem}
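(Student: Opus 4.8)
The plan is to reduce Theorem~\ref{th2} to Theorem~\ref{th1} in the borderline case $m=-d$, via the unitary equivalence between $\cO$ and the shifted Hermite operator $L_{-d}:=-\Dt+|x|^2-d$. Set $w(x)=e^{|x|^2/2}$ and let $M_w$ be multiplication by $w$; then $U:=M_w^{-1}$ is a unitary isomorphism of $L_2(\SRd,e^{-|x|^2}dx)$ onto $L_2(\SRd,dx)$. Since $\grad w=xw$ and $\Dt w=(d+|x|^2)w$, a one-line computation gives $\cO(w h)=w\,L_{-d}h$, i.e. $U\cO U^{-1}=L_{-d}$. As conjugation preserves the functional calculus, $e^{-t\cO}=M_w\,e^{-tL_{-d}}\,M_w^{-1}$, so the Poisson-like integral \eqref{pois} and the fractional power both conjugate, $\cO^\sigma=M_w\,L_{-d}^\sigma\,M_w^{-1}$. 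Because $w$ is independent of $t$ and evaluation at $x_0$ commutes with $\partial_t$, at the level of the representations \eqref{Lnu} and \eqref{Ga} this gives
\[
\cO^\sigma f(x_0)=e^{|x_0|^2/2}\,L_{-d}^\sigma\tilde f(x_0),\qquad \tilde f:=e^{-|\cdot|^2/2}f .
\]
Thus existence (in either sense) and the agreement of the two definitions for $\cO^\sigma f(x_0)$ follow from the corresponding statements for $L_{-d}^\sigma\tilde f(x_0)$. The integrability hypothesis passes through verbatim: for $m=-d$ the weight \eqref{phi} is $\Phi_\sigma(y)=e^{-|y|^2/2}[\ln(e+|y|)]^{-\sigma}$, and $\int_{\SRd}|\tilde f|\,\Phi_\sigma\,dy=\int_{\SRd}|f|\,e^{-|y|^2}[\ln(e+|y|)]^{-\sigma}\,dy$, so $f\in L_1(e^{-|y|^2}/[\ln(e+|y|)]^{\sigma})$ is precisely $\tilde f\in L_1(\Phi_\sigma)$.

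The remaining task is to pass the smoothness hypothesis through the Gaussian factor, and this is the crux. Writing $A(h)=f(x_0+h)+f(x_0-h)-2f(x_0)$ and $B(h)=f(x_0+h)-f(x_0-h)$ and expanding the second difference of $\tilde f=\om f$, with $\om=e^{-|\cdot|^2/2}$ smooth,
\[
\tilde f(x_0+h)+\tilde f(x_0-h)-2\tilde f(x_0)=\om(x_0)\,A(h)+\big(\grad\om(x_0)\!\cdot\! h\big)\,B(h)+E(h),
\]
where $|E(h)|\le C\big(|h|^2|A(h)|+|h|^2+|h|^3(|f(x_0+h)|+|f(x_0-h)|)\big)$. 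Using the elementary comparison $|h|^{-(d+2\sigma-s)}\le\dt^{\,s}|h|^{-(d+2\sigma)}$ for $|h|\le\dt$, $s>0$, together with $f(x_0\pm h)=f(x_0)+\tfrac12\big(A(h)\pm B(h)\big)$, every piece of $E(h)$ and the leading term $\om(x_0)A(h)$ are absorbed by the two integrals defining $\Dsigs$ (recall $\Dsigs$ requires $\int|A(h)|\,|h|^{-(d+2\sigma)}dh<\infty$ and $\int|B(h)|\,|h|^{-(d+2\sigma-3)}dh<\infty$) and by the local integrability of $f$. The same expansion shows $\tilde f$ inherits the antisymmetric integral of $\Dsigs$.

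The genuine obstacle is the first-order cross term $\big(\grad\om(x_0)\!\cdot\! h\big)B(h)$, where $\grad\om(x_0)=-x_0e^{-|x_0|^2/2}$. It vanishes for $x_0=0$, so there Theorem~\ref{th2} is immediate from Theorem~\ref{th1}; but for $x_0\neq0$ a crude bound would demand $\int_{|h|\le\dt}|B(h)|\,|h|^{-(d+2\sigma-1)}dh<\infty$, two powers of $|h|$ stronger than the antisymmetric condition in $\Dsigs$. Indeed $\tilde f$ need not belong to $\Dsigs$ at all: taking $f$ odd about $x_0$ makes $A\equiv0$ (so $f\in\Dsigs$ trivially) while the product with $\om$ generates a nonzero symmetric second difference of size $\sim|h|\,|B(h)|$, which may fail the Dini test. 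Hence Theorem~\ref{th1} cannot be invoked as a black box for this term. I would instead keep the cross term inside the Bochner integral \eqref{Ga} for $L_{-d}^\sigma\tilde f(x_0)$: there the odd factor $B(h)$ is paired with the antisymmetric (drift-generated) part of the Mehler kernel of $e^{-tL_{-d}}$, which vanishes to higher order and, after integrating $t^{-1-\sigma}\,dt$, restores exactly the two missing powers of $|h|$ --- matching the exponent $d+2\sigma-3$ in the definition of $\Dsigs$. Concretely, this amounts to re-running the kernel estimates behind Theorem~\ref{th1} with the multiplier $\om$ present; carrying out this cancellation is the main difficulty, and controlling it is precisely the purpose of the strict (antisymmetric) smoothness hypothesis.
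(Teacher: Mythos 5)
Your opening paragraphs reproduce, in slightly more formal language, exactly the paper's own proof of Theorem \ref{th2}: the paper conjugates by $e^{|x|^2/2}$ to pass to $L=-\Dt+|x|^2-d$, checks that the weights match, records the identities (ii)--(iii) for \eqref{def_Ls} and \eqref{def_lim}, and then finishes \emph{in one line} by asserting that $f\in\Dsigs$ if and only if $\tf\in\Dsigs$. Your third paragraph is where you part ways with the paper, and you are right to do so: the cross term $(\grad\om(x_0)\cdot h)\,\triangle^1_hf(x_0)$ in the expansion of $\triangle^2_h\tf(x_0)$ requires $\int_{|h|\le\dt}|\triangle^1_hf(x_0)|\,|h|^{-(d+2\sigma-1)}\,dh<\infty$, two powers of $|h|$ beyond the exponent $d+2\sigma-3$ built into $\Dsigs$, and your odd example shows the equivalence claimed by the paper genuinely fails when $x_0\neq0$. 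So what you have found is not a defect of your own write-up but a real gap in the paper's argument.

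However, your proposed repair cannot work, for a parity reason, and in fact the gap is unfixable because the statement itself fails under the stated hypotheses. The cross term $(\grad\om(x_0)\cdot h)B(h)$ is an \emph{even} function of $h$, so by \eqref{evenodd} it is integrated against the even part of the Hermite kernel, which is positive and of full Gaussian size $\approx s^{-d/2}e^{-|h|^2/(4s)}$, with no extra vanishing; the antisymmetric part of the kernel (the one carrying the extra factor of $s$ in Lemma \ref{L_h2}) pairs only with the odd part $\triangle^1_h\tf(x_0)$, whose leading term $\om(x_0)B(h)$ is precisely the term Theorem \ref{th1} already controls --- it never meets the cross term, so the two missing powers of $|h|$ are not recoverable. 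Moreover no hidden cancellation can save the theorem: take $d=1$, $x_0>0$ and $f(x)=\sign(x-x_0)\phi(|x-x_0|)$ with a smooth cutoff $\phi$, which is bounded, compactly supported and odd about $x_0$, hence lies in $\Dsigs$ for every $\sigma\in(0,1)$. Writing the Mehler kernel of $\cO$ as $k_t(x_0,y)=(\pi s)^{-1/2}e^{-(y-e^{-2t}x_0)^2/s}$ with $s=1-e^{-4t}$, one gets
\[
e^{-t\cO}f(x_0)-f(x_0)=\int_0^\infty\frac{e^{-(h+a)^2/s}-e^{-(h-a)^2/s}}{\sqrt{\pi s}}\,\phi(h)\,dh,
\qquad a=(1-e^{-2t})x_0,
\]
whose integrand has a fixed (negative) sign and which equals $-\tfrac{2}{\sqrt\pi}\,x_0\sqrt{t}\,\big(1+o(1)\big)$ as $t\to0^+$: the asymmetry of the Ornstein--Uhlenbeck kernel about $x_0$, namely $k_t(x_0,x_0+h)-k_t(x_0,x_0-h)\approx-2(x_0\cdot h)(4\pi t)^{-d/2}e^{-|h|^2/(4t)}$, carries \emph{no} factor of $t$, unlike the Hermite kernel in Lemma \ref{L_h2}. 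Consequently $\int_0^1|e^{-t\cO}f(x_0)-f(x_0)|\,t^{-1-\sigma}dt=\infty$ for every $\sigma\ge1/2$, so the ``absolutely convergent integral'' part of the conclusion fails; for $\sigma<1/2$ the unbounded odd example $f(x_0+h)=\sign(h)|h|^{-\ga}\phi(|h|)$ with $1-2\sigma\le\ga<1$ does the same. The correct remedy is to strengthen the hypothesis to the antisymmetric Dini condition with exponent $d+2\sigma-1$, i.e. $\int_{|h|\le\dt}|\triangle^1_hf(x_0)|\,|h|^{-(d+2\sigma-1)}\,dh<\infty$ together with $f\in\Dsig$; under that assumption your own expansion shows $\tf\in\Dsigs$, and then the reduction to Theorem \ref{th1} that both you and the paper use goes through verbatim.
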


Below, we have attempted to present our results in sufficient generality so that many of the arguments can be applied to
 general operators $L$, while only a few steps require explicit estimates on the kernels.
We illustrate this fact in \S\ref{S_other}, where with a minimal effort we obtain a version of the above theorems for the operator $\SL=-\Dt+R$, with $R>0$;
see Theorem \ref{th3} below.

\section{Preliminary results for general operators $L$}
\label{pL}
In this section we shall assume that $L$ is the infinitesimal generator of a semigroup of operators
$\{e^{-tL}\}_{t>0}$ in $L_2(\SRd)$, and that these are described by the integrals
\Be
e^{-tL}f(x)=\int_{\SR^d}h_t(x,y)f(y)\,dy,
\label{htf}\Ee
for a suitable positive kernel $h_t(x,y)$. 
For each $\sigma\in(0,1)$ we then consider the family of subordinated operators $\{P^\sigma_t=P^{\sigma,L}_t\}_{t>0}$,
formally given by %in \eqref{pois}, that is,
\[
P^\sigma_tf:=\tfrac{(t/2)^{2\sigma}}{\Ga(\sigma)}\,\int_0^\infty e^{-\frac{t^2}{4s}}\,e^{-sL}(f)\,
\,\frac{ds}{s^{1+\sigma}}.
\]
More precisely, we let
\Be
P^\sigma_tf(x)=\int_{\SR^d}p^\sigma_t(x,y)f(y)\,dy,
\label{Pnuf}
\Ee
where the corresponding  kernels $p^\sigma_t(x,y)=p^{\sigma,L}_t(x,y)$ are defined by 
\Be
p^\sigma_t(x,y)=\tfrac{(t/2)^{2\sigma}}{\Gamma(\sigma)}\,\int_0^\infty
 e^{-\frac{t^2}{4s}}\,h_s(x,y)\,\frac{ds}{s^{1+\sigma}}.
\label{pt0}\Ee
Observe that a crude estimate such as $0<h_s(x,y)\lesssim s^{-d/2}$ (which will be satisfied by all the operators $L$ we shall use)
guarantees that the integral in \eqref{pt0} is absolutely convergent, and moreover
\Be
\partial_t\big[p^\sigma_t(x,y)\big] \,=\, %& =& \frac{2\sigma}{t}p_t(x,y)-c_\sigma t^{2\sigma}\int_0^\infty \frac{t}{2u} e^{-\frac{t^2}{4u}}h_u(x,y)\frac{du}{u^{1+\sigma}}\nonumber \\
%&= & 
\fa_\sigma \, t^{2\sigma-1}\int_0^\infty \Big(2\sigma-\frac{t^2}{2s}\Big) e^{-\frac{t^2}{4s}}h_s(x,y)\frac{ds}{s^{1+\sigma}},
\label{pt1}
\Ee
with $\fa_\sigma=1/(4^\sigma\Ga(\sigma))$. It is also not hard to show that
\Be
t\,\Big|\partial_t\big[p^\sigma_t(x,y)\big] \Big|\,\lesssim\, p_t^\sigma(x,y)+p_{t/\sqrt2}^\sigma(x,y),
\label{tpt}
\Ee
using the fact that $\sup_{v>0}ve^{-v}<\infty$. However, in order to handle derivatives of the expression $P^\sigma_tf(x)$ in \eqref{Pnuf} we shall need more information 
on the decay of the kernel $p^\sigma_t(x,y)$. 

In the case that $L$ is a Hermite operator the decay is given by the following result from \cite[Lemma 3.1]{G17},
which also clarifies the optimal role of the functions $\Phi_\sigma(y)$ in \eqref{phi}.

\begin{lemma}\label{L1}
Let $L$ be the Hermite operator in \eqref{Hm},  $\sigma\in(0,1)$ and $\Phi_\sigma(y)$ as in \eqref{phi}. 
Then, for every $t>0$ and $x\in\SR^d$, there exist finite numbers $c_1(t,x)>0$ and $c_2(t,x)>0$ such
that \Be c_1(t,x)\, \Phi_\sigma(y) \,\leq\,p^\sigma_t(x,y)\,\leq\,
c_2(t,x)\, \Phi_\sigma(y) \;,\quad \forall\;y\in\SR^d. \label{pk1}\Ee  
\end{lemma}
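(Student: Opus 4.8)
The plan is to insert the explicit heat kernel of $L$ into the definition \eqref{pt0} and to track the dependence on $y$ for fixed $(t,x)$. For the Hermite operator \eqref{Hm} the kernel $h_s(x,y)$ is the Mehler kernel
\[
h_s(x,y)=\frac{e^{-ms}}{(2\pi\sinh 2s)^{d/2}}\exp\!\Big(-\tfrac12\coth(2s)\,(|x|^2+|y|^2)+\frac{x\cdot y}{\sinh 2s}\Big).
\]
Since both $p^\sigma_t(x,\cdot)$ and $\Phi_\sigma$ are continuous and strictly positive, on any ball $\{|y|\le R\}$ the quotient $p^\sigma_t(x,y)/\Phi_\sigma(y)$ is bounded above and below by positive constants depending on $(t,x,R)$; hence it suffices to establish \eqref{pk1} for $|y|$ large. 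The $y$-dependence of the integrand in \eqref{pt0} is entirely carried by the factor $\exp(-\tfrac12|y|^2\coth 2s+x\cdot y/\sinh 2s)$, and the first task is to separate from it the sharp Gaussian $e^{-|y|^2/2}$ that appears in every $\Phi_\sigma$.

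First I would localize the $s$-integral. Because $\coth 2s\to\infty$ as $s\to0^+$, on a fixed interval $(0,s_0)$ the $y$-exponent is $\le-c|y|^2$ with $c>\tfrac12$ once $|y|\ge 4|x|$ and $s_0$ is small enough; together with the factor $e^{-t^2/4s}$ this makes the contribution of $(0,s_0)$ to $p^\sigma_t(x,y)$ a $O(e^{-c|y|^2})$ with $c>\tfrac12$, negligible against $\Phi_\sigma(y)$. On $[s_0,\infty)$ I write $\coth 2s=1+(\coth 2s-1)$, where $\coth 2s-1=e^{-2s}/\sinh 2s\asymp e^{-4s}$, so as to factor out $e^{-|y|^2/2}$; the cross term is then absorbed by the elementary inequality
\[
\frac{x\cdot y}{\sinh 2s}\ \le\ \tfrac14\,|y|^2(\coth 2s-1)+\frac{|x|^2e^{2s}}{\sinh 2s},
\]
whose last summand equals $2|x|^2/(1-e^{-4s})$ and is bounded by a constant $C(x,s_0)$ for $s\ge s_0$; in the opposite direction, in the range $s\approx\tfrac12\ln|y|$ that will dominate the integral one has $\sinh 2s\asymp|y|$, whence $|x\cdot y|/\sinh 2s\lesssim|x|$ is bounded. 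Crucially, the coefficient $\tfrac14$ in front of the refinement $(\coth 2s-1)$ is harmless: it alters only constants, not the powers of $|y|$ and $\ln|y|$ extracted below.

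It then remains to estimate, up to $(t,x)$-dependent constants, the reduced integrals
\[
J_a(|y|)=\int_{s_0}^\infty (\sinh 2s)^{-d/2}\,e^{-ms}\,e^{-t^2/4s}\,e^{-a|y|^2(\coth 2s-1)}\,\frac{ds}{s^{1+\sigma}},\qquad a>0 .
\]
Using $(\sinh 2s)^{-d/2}\asymp e^{-ds}$ for $s\ge s_0$ and substituting $w=|y|^2e^{-4s}$ (so that $s=\tfrac12\ln|y|-\tfrac14\ln w$ and $ds=-\tfrac14\,dw/w$) turns this, up to bounded factors, into
\[
J_a(|y|)\ \asymp\ |y|^{-(d+m)/2}\int_0^{c_0|y|^2} w^{(d+m)/4}\,e^{-a'w}\,\big(\tfrac12\ln|y|-\tfrac14\ln w\big)^{-1-\sigma}\,\frac{dw}{w}
\]
for some $a'>0$ and $c_0=e^{-4s_0}$. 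When $m>-d$ the exponent $(d+m)/4$ is positive, the $w$-integral converges and concentrates at $w\asymp1$ (i.e.\ $s\approx\tfrac12\ln|y|$), where the logarithmic factor is $\asymp(\ln|y|)^{-1-\sigma}$; this gives $J_a\asymp|y|^{-(d+m)/2}(\ln|y|)^{-1-\sigma}$, matching \eqref{phi}. When $m=-d$ the power of $|y|$ disappears and the $w$-integral diverges logarithmically at $w=0$; writing $v=\ln(1/w)$ the remaining integral becomes $\int^\infty(\tfrac12\ln|y|+\tfrac14 v)^{-1-\sigma}\,dv\asymp(\ln|y|)^{-\sigma}$, again matching \eqref{phi}. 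For the lower bounds I would not need the full asymptotics: it suffices to restrict $J_a$ to the window $s\in[\tfrac12\ln|y|,\tfrac12\ln|y|+1]$ when $m>-d$, and to the whole tail $s\ge\tfrac12\ln|y|$ when $m=-d$, intervals on which every factor is comparable to its peak value.

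The main obstacle is this last step: obtaining the \emph{sharp} powers of the logarithm together with the correct dichotomy between $m>-d$ and $m=-d$. Everything hinges on the Gaussian-in-$y$ localization sitting at the scale $s\sim\tfrac12\ln|y|$, where the slowly varying factor $s^{-1-\sigma}$ is essentially constant; the matching of upper and lower bounds requires showing that the part of the $s$-integral away from this scale is of strictly lower order, and---in the critical case $m=-d$---recognizing that the relevant $s$-range is an entire tail rather than a bounded window, which is precisely what converts the logarithmic power $1+\sigma$ into $\sigma$. Keeping the exponent $e^{-|y|^2/2}$ sharp while absorbing the cross term (the previous step) is the other delicate point, but the inequality above reduces it to bounded $(x,s_0)$-dependent constants.
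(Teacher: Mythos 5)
Your proposal is correct, but there is nothing in the paper to compare it against: the paper does not prove Lemma \ref{L1} at all, it imports it verbatim from \cite[Lemma 3.1]{G17}. So what you have written is a self-contained substitute for the citation, and I have checked its key steps: the Mehler kernel you start from agrees with the paper's form (the exponent $-\frac{|x-y|^2}{2\tanh 2s}-\tanh(s)\,x\cdot y$ in \S\ref{S_Her} equals $-\frac12\coth(2s)(|x|^2+|y|^2)+\frac{x\cdot y}{\sinh 2s}$ by standard hyperbolic identities); the discard of $s\in(0,s_0)$ with a gain $e^{-(\frac12+\gamma)|y|^2}$ is sound (it is the same mechanism as Lemma \ref{L_h3}); your AM--GM inequality for the cross term is exactly right, and the resulting change of the constant $a$ in $e^{-a|y|^2(\coth 2s-1)}$ is indeed harmless, since in the $w$-integral the constant $a'$ only enters multiplicative constants, never the powers of $|y|$ or $\ln|y|$; and the window-versus-tail dichotomy at the critical scale $s\sim\frac12\ln|y|$ correctly produces $|y|^{-(d+m)/2}(\ln|y|)^{-1-\sigma}$ for $m>-d$ and $(\ln|y|)^{-\sigma}$ for $m=-d$, matching \eqref{phi}. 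The only difference from the route in the cited literature (and from the related computations the paper does carry out in \S\ref{S_4.1}) is a change of coordinates: there one compactifies time via $s=\tanh t$ as in \eqref{mehler2}--\eqref{cKs}, so that the prefactor $(1-s)^{\frac{m+d}{2}}$ carries the $m$-dependence and the critical region reads $1-s\asymp 1/|y|$, whereas you stay in the original time variable and substitute $w=|y|^2e^{-4s}$; the two parametrizations are equivalent. Two presentational caveats if you write this up in full: (i) your displayed ``$\asymp$'' for $J_a$ should formally be stated as two one-sided bounds with different constants $a'$, since replacing $\coth 2s-1$ by a multiple of $e^{-4s}$ degrades constants differently in the two directions; (ii) in the upper bound for $m>-d$ the region $1\le w\le c_0|y|^2$ needs the extra split at $w=\sqrt{|y|}$ (log factor small below, super-exponential decay above) to see that it contributes $O\big((\ln|y|)^{-1-\sigma}\big)$ rather than $O(1)$.
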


In this section we wish to keep the general setting for the operator $L$ described above,  but we shall \emph{additionally} assume that, 
for each given $\sigma\in(0,1)$,  the kernel $p^\sigma_t(x,y)$ satisfies 
%the upper bound in \eqref{pk1} 
\Be
p^\sigma_t(x,y)\,\leq\,
c_2(t,x)\, \Phi(y) \;,\quad \forall\;y\in\SR^d. \label{pk2}
\Ee
for some function $\Phi(y)$ and some $c_2(t,x)>0$. 
In all the results in this section we shall not need the explicit expression of $\Phi(y)$.

Our first result will establish a relation between the two \emph{pointwise} definitions of $L^\sigma f(x)$ presented in \eqref{Ga} and \eqref{Lnu} above.
We first consider the following general definition.

\begin{defi}\label{def_Af}{\rm
Let $\sigma\in(0,1)$ and $L$ be an operator such that \eqref{pk2} holds for some  $\Phi(y)$. Given $f\in L_1(\Phi)$, we say that a point $x_0\in\SR^d$ is 
$L^\sigma$-admissible for $f$, denoted  $x_0\in\cA_f(L^\sigma)$, if 
\Be
\int_0^\infty \big|e^{-sL}f(x_0)-f(x_0)\big|\frac{ds}{s^{1+\sigma}}<\infty.
\label{Af}
\Ee
In that case we let 
\Be\label{def_Ls}
L^\sigma f(x_0):=\tfrac1{\Ga(-\sigma)}\int_0^\infty \Big(e^{-sL}f(x_0)-f(x_0)\Big)\,\frac{ds}{s^{1+\sigma}},
\Ee
where $\Ga(-\sigma)=\Ga(1-\sigma)/(-\sigma)$.
}
\end{defi}
\sline

The following result is partially based on the proof of \cite[(4.6)]{ST}.

\begin{proposition}\label{mainPr}
Let $\sigma\in(0,1)$ and $L$ be an operator such that \eqref{pk2} holds for some  $\Phi(y)$.
If $f\in L_1(\Phi)$ and $x\in\cA_f(L^\sigma)$ then 
\Be\label{def_lim}
\lim_{t\to0^+}\,-\fc_\sigma\, t^{1-2\sigma}\,\partial_t\Big[P^\sigma_tf(x)\Big] = L^\sigma f(x)\,,
\Ee
with $\fc_\sigma=2^{2\sigma-1}\Ga(\sigma)/\Ga(1-\sigma)$.
\end{proposition}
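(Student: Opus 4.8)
The plan is to differentiate the Poisson-like integral $P^\sigma_tf(x)$ in the variable $t$, rewrite the result as a single integral against the semigroup $\{e^{-sL}\}$, and then exploit a cancellation that reduces everything to the integral in \eqref{def_Ls}. First I would justify differentiation under the integral sign in \eqref{Pnuf}: combining \eqref{tpt} with the bound \eqref{pk2} gives $\sup_{t\in[t_0/2,2t_0]}|\partial_t p^\sigma_t(x,y)|\lesssim\Phi(y)$ (with a constant depending on $x$ and $t_0$), so that $f\in L_1(\Phi)$ lets me pass $\partial_t$ inside and, by Fubini, interchange the $y$- and $s$-integrals. Using the explicit formula \eqref{pt1} for $\partial_t[p^\sigma_t(x,y)]$, the identity $\int h_s(x,y)f(y)\,dy=e^{-sL}f(x)$, and the cancellation of powers $t^{1-2\sigma}\cdot t^{2\sigma-1}=1$, this yields
\[
-\fc_\sigma\,t^{1-2\sigma}\,\partial_t\big[P^\sigma_tf(x)\big]=-\fc_\sigma\fa_\sigma\int_0^\infty\Big(2\sigma-\tfrac{t^2}{2s}\Big)e^{-\frac{t^2}{4s}}\,e^{-sL}f(x)\,\frac{ds}{s^{1+\sigma}}.
\]

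The key step, following the idea behind the proof of \cite[(4.6)]{ST}, is the cancellation identity
\[
\int_0^\infty\Big(2\sigma-\tfrac{t^2}{2s}\Big)e^{-\frac{t^2}{4s}}\,\frac{ds}{s^{1+\sigma}}=0,
\]
which I would verify by the substitution $w=t^2/(4s)$, reducing the integral to $(4/t^2)^\sigma\big[2\sigma\Ga(\sigma)-2\Ga(\sigma+1)\big]=0$. Subtracting $f(x)$ times this vanishing integral, the expression becomes
\[
-\fc_\sigma\,t^{1-2\sigma}\,\partial_t\big[P^\sigma_tf(x)\big]=-\fc_\sigma\fa_\sigma\int_0^\infty\Big(2\sigma-\tfrac{t^2}{2s}\Big)e^{-\frac{t^2}{4s}}\big(e^{-sL}f(x)-f(x)\big)\,\frac{ds}{s^{1+\sigma}}.
\]
I would then let $t\to0^+$ by dominated convergence. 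Rewriting the multiplier in terms of $w=t^2/(4s)$, it equals $(2\sigma-2w)e^{-w}$, which is bounded by $C_\sigma:=\sup_{w>0}|(2\sigma-2w)e^{-w}|<\infty$ \emph{uniformly} in $t$ and $s$, and converges pointwise to $2\sigma$ as $t\to0^+$. Since $x\in\cA_f(L^\sigma)$ says precisely that $|e^{-sL}f(x)-f(x)|\,s^{-1-\sigma}$ is integrable on $(0,\infty)$, the function $C_\sigma\,|e^{-sL}f(x)-f(x)|\,s^{-1-\sigma}$ is a $t$-independent integrable majorant, so
\[
\lim_{t\to0^+}-\fc_\sigma\,t^{1-2\sigma}\,\partial_t\big[P^\sigma_tf(x)\big]=-2\sigma\fc_\sigma\fa_\sigma\int_0^\infty\big(e^{-sL}f(x)-f(x)\big)\,\frac{ds}{s^{1+\sigma}}.
\]

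Finally I would check the constants: with $\fc_\sigma=2^{2\sigma-1}\Ga(\sigma)/\Ga(1-\sigma)$ and $\fa_\sigma=1/(4^\sigma\Ga(\sigma))$ one gets $-2\sigma\fc_\sigma\fa_\sigma=-\sigma/\Ga(1-\sigma)=1/\Ga(-\sigma)$, so the right-hand side is exactly $L^\sigma f(x)$ as defined in \eqref{def_Ls}. The hard part will be the first step: ensuring that the constant in \eqref{pk2} can be taken locally bounded in $t$, so as to differentiate under the integral and apply Fubini legitimately. This can be arranged because $p^\sigma_t(x,y)$ depends smoothly on $t>0$ through \eqref{pt0}, and \eqref{tpt}--\eqref{pk2} then control $\partial_t p^\sigma_t$ on compact $t$-intervals. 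Once that integral representation is secured, the cancellation identity together with the uniform boundedness of the multiplier makes the passage to the limit essentially automatic, with the admissibility hypothesis furnishing exactly the majorant required.
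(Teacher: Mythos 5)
Your proof is correct and follows essentially the same route as the paper's: differentiation under the integral sign justified via \eqref{pt1}, \eqref{tpt}, \eqref{pk2} and $f\in L_1(\Phi)$, the vanishing of $\int_0^\infty\big(2\sigma-\frac{t^2}{2s}\big)e^{-t^2/4s}\,\frac{ds}{s^{1+\sigma}}$ via the substitution $z=t^2/4s$, and then dominated convergence with the admissibility hypothesis \eqref{Af} supplying the $t$-independent majorant. The only differences are cosmetic: you make explicit the uniform bound $\sup_{w>0}|(2\sigma-2w)e^{-w}|<\infty$ and the constant check $-2\sigma\,\fc_\sigma\fa_\sigma=1/\Ga(-\sigma)$, which the paper compresses into ``after adjusting the constants one easily obtains the result.''
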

\begin{proof}
 Using \eqref{Pnuf}, \eqref{pt1}, \eqref{tpt} and $f\in L_1(\Phi)$ one can justify that 
$$
t^{1-2\sigma}\,\partial_t\Big[P^\sigma_tf(x)\Big] =\fa_\sigma \,\int_{\SR^d}\int_0^\infty \Big(2\sigma-\frac{t^2}{2s}\Big) e^{-\frac{t^2}{4s}}h_s(x,y)\,\frac{ds}{s^{1+\sigma}}\,f(y)\,dy,$$
with $\fa_\sigma=1/(4^\sigma\Ga(\sigma))$. 
We claim that
$$I=\int_0^\infty
\Big(2\sigma-\frac{t^2}{2s}\Big)
e^{-\frac{t^2}{4s}}\frac{ds}{s^{1+\sigma}}=0.$$
 In fact, using the
change $z=t^2/4s$ we see that
$$I=\frac{2\cdot4^{\sigma}}{t^{2\sigma}}\int_0^\infty (\sigma-z)e^{-z}z^{\sigma-1}\,dz=\frac{2\cdot 4^{\sigma}}{t^{2\sigma}}\big(\sigma\Gamma(\sigma)-\Gamma(\sigma+1)\big)=0.$$
Then
\Beas
t^{1-2\sigma}\,\partial_t\Big[P^\sigma_tf(x)\Big] & = & \fa_\sigma\, \int_0^\infty \Big(2\sigma-\frac{t^2}{2s}\Big) e^{-\frac{t^2}{4s}}\,\Big[\int_{\SR^d}h_s(x,y)f(y)\,dy -f(x) \Big]\frac{ds}{s^{1+\sigma}}\\
&=& \fa_\sigma\,\int_0^\infty \Big(2\sigma-\frac{t^2}{2s}\Big) e^{-\frac{t^2}{4s}}\,\Big[e^{-sL}f(x)-f(x)\Big]\frac{ds}{s^{1+\sigma}}.
\Eeas
Since we assume that $x\in\cA_f(L^\sigma)$, by the Lebesgue dominated convergence theorem we can take limits as $t\to0^+$,
and after adjusting the constants one easily obtains the result.
\end{proof}

In view of Proposition \ref{mainPr}, we are interested in finding conditions on a function $f$ which guarantee that a given point $x\in \cA_f(L^\sigma)$. Our next observation shows that only one part of the integral in \eqref{Af} must be checked.

\begin{lemma}\label{L_Ainfty}
Let $\sigma\in(0,1)$ and $L$ be an operator such that \eqref{pk2} holds for some  $\Phi(y)$. Then, for every $A>0$ and every $x\in\SR^d$ there exists  $c(x,A)>0$ such that
\Be\label{hsA}
\int_A^\infty h_s(x,y)\,\frac{ds}{s^{1+\sigma}}\leq c(x,A)\,\Phi(y),\quad y\in\SRd.
\Ee
Moreover, if $f\in L_1(\Phi)$ and $|f(x)|<\infty$ then
\[
\int_A^\infty|e^{-sL}f(x)-f(x)|\,\frac{ds}{s^{1+\sigma}}<\infty. %, \quad \forall x\in\SR^d.
\]
\end{lemma}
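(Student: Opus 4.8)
The plan is to compare the tail integral $\int_A^\infty h_s(x,y)\,\frac{ds}{s^{1+\sigma}}$ directly with the subordinated kernel $p^\sigma_t(x,y)$ for a \emph{fixed} value of $t$, exploiting that the Gaussian factor $e^{-t^2/(4s)}$ in the definition \eqref{pt0} is bounded below once $s$ stays away from zero. Concretely, I would fix any convenient $t$ (say $t=1$) and discard the contribution of $s\in(0,A)$ in \eqref{pt0} using the positivity of $h_s$, which yields
\[
p^\sigma_t(x,y)\,\geq\,\tfrac{(t/2)^{2\sigma}}{\Ga(\sigma)}\int_A^\infty e^{-\frac{t^2}{4s}}\,h_s(x,y)\,\frac{ds}{s^{1+\sigma}}.
\]
On the range $s\geq A$ one has $t^2/(4s)\leq t^2/(4A)$, hence $e^{-t^2/(4s)}\geq e^{-t^2/(4A)}$; pulling this constant out and invoking the standing hypothesis \eqref{pk2}, namely $p^\sigma_t(x,y)\leq c_2(t,x)\Phi(y)$, gives
\[
\int_A^\infty h_s(x,y)\,\frac{ds}{s^{1+\sigma}}\,\leq\,\tfrac{\Ga(\sigma)}{(t/2)^{2\sigma}}\,e^{\frac{t^2}{4A}}\,p^\sigma_t(x,y)\,\leq\,c(x,A)\,\Phi(y),
\]
with $c(x,A):=\tfrac{\Ga(\sigma)}{(t/2)^{2\sigma}}\,e^{t^2/(4A)}\,c_2(t,x)$. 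This proves \eqref{hsA}; note the chain is valid between nonnegative quantities and the right-hand side is finite by \eqref{pk2}, so no circularity arises.

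For the ``moreover'' part, I would split the integrand by the triangle inequality, $|e^{-sL}f(x)-f(x)|\leq |e^{-sL}f(x)|+|f(x)|$, and treat the two pieces separately. The contribution of the constant term is immediate: since $\int_A^\infty s^{-1-\sigma}\,ds=A^{-\sigma}/\sigma<\infty$ and $|f(x)|<\infty$ by assumption, the term $|f(x)|\int_A^\infty \frac{ds}{s^{1+\sigma}}$ is finite. For the remaining term I would bound $|e^{-sL}f(x)|\leq\int_{\SR^d}h_s(x,y)\,|f(y)|\,dy$ using \eqref{htf} and the positivity of the kernel, and then apply Tonelli's theorem (legitimate since every factor is nonnegative) to exchange the order of integration:
\[
\int_A^\infty|e^{-sL}f(x)|\,\frac{ds}{s^{1+\sigma}}\,\leq\,\int_{\SR^d}|f(y)|\,\Big(\int_A^\infty h_s(x,y)\,\frac{ds}{s^{1+\sigma}}\Big)\,dy.
\]
Inserting the estimate \eqref{hsA} just established bounds the inner integral by $c(x,A)\,\Phi(y)$, so the whole expression is at most $c(x,A)\int_{\SR^d}|f(y)|\,\Phi(y)\,dy$, which is finite precisely because $f\in L_1(\Phi)$ in the sense of \eqref{fphi}.

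The argument is essentially routine; the only point requiring genuine care is the first comparison, that is, recognizing that the tail integral is controlled by $p^\sigma_t(x,y)$ at a \emph{single} fixed $t$ rather than through a limiting procedure, and that the small-$s$ portion may be dropped by positivity. The uniform lower bound on the Gaussian on $[A,\infty)$ and the Tonelli interchange are then immediate. As desired, no step uses the explicit form of $\Phi$, keeping the lemma within the general framework of this section.
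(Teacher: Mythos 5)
Your proof is correct and follows essentially the same route as the paper: both arguments bound the tail integral by a fixed multiple of $p^\sigma_1(x,y)$ using the uniform lower bound $e^{-t^2/(4s)}\geq e^{-t^2/(4A)}$ on $s\geq A$ together with positivity of the discarded part, and then invoke \eqref{pk2}. The second half (triangle inequality, Tonelli, and insertion of \eqref{hsA}) is also identical to the paper's argument.
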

\begin{proof}
To prove \eqref{hsA}, note that
\[
\int_A^\infty h_s(x,y)\,\frac{ds}{s^{1+\sigma}}\leq e^{\frac1{4A}}\,\int_0^\infty e^{-\frac1{4s}}\,h_s(x,y)\,\frac{ds}{s^{1+\sigma}}= c\, p_1^\sigma(x,y)\leq c(x,A)\,
\Phi(y).
\]
For the last statement,
\Beas
\int_A^\infty\! |e^{-sL}f(x)-f(x)|\frac{ds}{s^{1+\sigma}}& \leq &  \int_{\SRd}|f(y)|\int_A^\infty\! h_s(x,y)\,\frac{ds}{s^{1+\sigma}}\,dy
+|f(x)|\,\int_A^\infty\!\frac{ds}{s^{1+\sigma}}\\
\mbox{{\footnotesize by \eqref{hsA}}} & \lesssim & c(x,A)\int_{\SRd}|f(y)|\,\Phi(y)\,dy +|f(x)|<\infty.
\Eeas
\end{proof}

In order to show that $x_0\in \cA_f(L^\sigma)$ we expect that some smoothness of $f$ at the point $x_0$ must be required. Actually, the smoothness of $f$ will only play a \emph{local} role in the integrals defining the property $\cA_f(L^\sigma)$. This motivates to consider a local notion of $L^\sigma$-admissibility.    

\begin{defi}\label{def_Aloc}{\rm
Let $\sigma\in(0,1)$, and $L$ an operator as above.
Given a locally integrable function $f$, we say that a point $x_0\in\SR^d$ is 
\emph{locally $L^\sigma$-admissible for $f$}, denoted  $x_0\in\cA^{\rm loc}_f(L^\sigma)$, if there exists $\dt>0$ and $A>0$ such that the integrals
\Be\label{Idtf}
\cI_\delta f(x_0,s)=\int_{|x_0-y|<\delta}h_s(x_0,y)\,\big[f(y)-f(x_0)\big]dy,
\quad s\in(0,A)\Ee
satisfy the property
\Be\label{AIdt}
\int_{0}^{A} \big|\cI_\delta f(x_0,s)\big|\,\frac{ds}{s^{1+\sigma}}\,<\,\infty.
\Ee
}
\end{defi}

The next lemma gives decay conditions on the kernel and smoothness of $f$ at $x_0$ that guarantee the validity of the previous property.

\begin{proposition}
\label{Kernel prop.}
Let $\sigma\in(0,1)$, and $L$ an operator as above.
Let $x_0\in\SRd$ be fixed, and assume that the kernel $h_s(x_0,\cdot)$ in \eqref{htf}
satisfies, for some $\dt>0$ and $A\in(0,\infty]$, the estimates
\Be \int_{0}^{A}h_s(x_0,x_0+y)\frac{ds}{s^{1+\sigma}}\,\leq
\frac{c(x_0)}{|y|^{d +2\sigma}},\quad \mbox{when }\;\;|y|\leq\dt\label{C.Tamano}\Ee and
\Be\int_{0}^{A}
\Big|h_s(x_0,x_0+y)-h_s(x_0,x_0-y)\Big|\frac{ds}{s^{1+\sigma}}\,\leq
\,\frac{c(x_0)}{|y|^{(d +2\sigma-3)_+}},\quad \mbox{when }\;\; |y|\leq \dt.
\label{C.Suavidad}\Ee
 Then, for every locally integrable $f$ it holds  
 \Be\label{L_imply}
 f\in\Dsigs \quad \implies \quad x_0\in\cA^{\rm loc}_f(L^\sigma).
 \Ee
 Moreover, \eqref{AIdt} holds with the same $A$ and $\dt$ as in \eqref{C.Tamano} and \eqref{C.Suavidad}.
\end{proposition}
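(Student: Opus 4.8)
The plan is to reduce everything to the two integral conditions defining $\Dsigs$ by exploiting a symmetrization in the variable $h=y-x_0$. First I would change variables in \eqref{Idtf}, writing
\[
\cI_\dt f(x_0,s)=\int_{|h|<\dt}h_s(x_0,x_0+h)\,\big[f(x_0+h)-f(x_0)\big]\,dh,
\]
and, since the domain $\{|h|<\dt\}$ is invariant under $h\mapsto-h$, average the integrand with its reflection. Writing $K_\pm=h_s(x_0,x_0\pm h)$ and $\Delta_\pm=f(x_0\pm h)-f(x_0)$, the elementary identity
\[
K_+\Delta_++K_-\Delta_-=\tfrac12(K_++K_-)\big(\Delta_++\Delta_-\big)+\tfrac12(K_+-K_-)\big(\Delta_+-\Delta_-\big)
\]
then splits $\cI_\dt f(x_0,s)$ into a \emph{symmetric} part, carrying the second difference $\Delta_++\Delta_-=f(x_0+h)+f(x_0-h)-2f(x_0)$, and an \emph{antisymmetric} part, carrying the first difference $\Delta_+-\Delta_-=f(x_0+h)-f(x_0-h)$.

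The reason this decomposition is the crux of the matter — and the step I expect to be the main obstacle — is that $\cD^{2\sigma}(x_0)$ only controls the \emph{second} difference of $f$, which is strictly weaker than a first-order (Dini) condition. A direct estimate of $\cI_\dt f$ using only the size bound \eqref{C.Tamano} would force the appearance of $|f(x_0+h)-f(x_0)|/|h|^{d+2\sigma}$, which need not be integrable under the hypotheses. Symmetrization is precisely what allows the second difference to surface against the symmetric part of the kernel, while the uncancelled first difference gets paired with the kernel \emph{difference} $K_+-K_-$, which is small by \eqref{C.Suavidad}.

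Next I would insert this decomposition into \eqref{AIdt}, take absolute values, and apply Tonelli to bring the $\frac{ds}{s^{1+\sigma}}$ integration inside the $h$-integral. On the symmetric term, \eqref{C.Tamano} applied at both $+h$ and $-h$ gives $\int_0^A(K_++K_-)\,\frac{ds}{s^{1+\sigma}}\lesssim |h|^{-(d+2\sigma)}$; on the antisymmetric term, \eqref{C.Suavidad} gives $\int_0^A|K_+-K_-|\,\frac{ds}{s^{1+\sigma}}\lesssim |h|^{-(d+2\sigma-3)_+}$. This leads to the bound
\[
\int_0^A\big|\cI_\dt f(x_0,s)\big|\frac{ds}{s^{1+\sigma}}\lesssim \int_{|h|<\dt}\frac{|f(x_0+h)+f(x_0-h)-2f(x_0)|}{|h|^{d+2\sigma}}\,dh+\int_{|h|<\dt}\frac{|f(x_0+h)-f(x_0-h)|}{|h|^{(d+2\sigma-3)_+}}\,dh.
\]

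The first integral is finite because $f\in\cD^{2\sigma}(x_0)$. For the second, I would split into two cases according to the sign of $d+2\sigma-3$: when it is positive the exponent equals $d+2\sigma-3$ and finiteness is exactly the strict condition in the definition of $\Dsigs$; when it is $\leq0$ the exponent is $0$ and the integral reduces to $\int_{|h|<\dt}|f(x_0+h)-f(x_0-h)|\,dh$, which is finite by local integrability of $f$ (matching the remark that the strict condition is then redundant). Since all bounds use the same $\dt$ and $A$ appearing in \eqref{C.Tamano}--\eqref{C.Suavidad}, this establishes \eqref{AIdt} and hence $x_0\in\cA^{\rm loc}_f(L^\sigma)$.
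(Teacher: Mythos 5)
Your proof is correct and follows essentially the same route as the paper: your polarization identity for $K_\pm$ and $\Delta_\pm$ is exactly the even/odd decomposition (the paper's identity \eqref{evenodd}) that the authors use to split $\cI_\dt f(x_0,s)$ into a second-difference term paired with the size bound \eqref{C.Tamano} and a first-difference term paired with the antisymmetry bound \eqref{C.Suavidad}, followed by the same Tonelli step and case analysis on the sign of $d+2\sigma-3$.
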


Before proving the result, recall the standard notation
\[
f_{\rm even}(x)=\frac{f(x)+f(-x)}2\mand f_{\rm odd}(x)=\frac{f(x)-f(-x)}2.
\]
We shall use the following elementary lemma.
\begin{lemma}
Let $F$ and $G$ be locally integrable in $\SR^d$, and $B$ a ball centered at the origin. Then
\Be\label{evenodd}
\int_B F(x)G(x)\,dx=\int_B F_{\rm even}(x)G_{\rm even}(x)\,dx +
\int_B F_{\rm odd}(x)G_{\rm odd}(x)\,dx.
\Ee
\end{lemma}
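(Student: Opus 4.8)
The plan is to treat this as a purely algebraic identity combined with the reflection symmetry of $B$, with no analysis beyond checking that the relevant integrals converge. Writing $F=F_{\rm even}+F_{\rm odd}$ and $G=G_{\rm even}+G_{\rm odd}$ and expanding the product $FG$ into four terms, the two ``diagonal'' pieces $F_{\rm even}G_{\rm even}$ and $F_{\rm odd}G_{\rm odd}$ are exactly those appearing on the right-hand side of \eqref{evenodd}. Everything therefore reduces to showing that the two ``cross'' terms $F_{\rm even}G_{\rm odd}$ and $F_{\rm odd}G_{\rm even}$ contribute nothing after integration over $B$.

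First I would observe that each cross term is an odd function of $x$: indeed $F_{\rm even}(-x)G_{\rm odd}(-x)=F_{\rm even}(x)\bigl(-G_{\rm odd}(x)\bigr)=-F_{\rm even}(x)G_{\rm odd}(x)$, and likewise for $F_{\rm odd}G_{\rm even}$. Since $B$ is a ball centered at the origin, it is invariant under the reflection $x\mapsto-x$, whose Jacobian has absolute value $1$; hence the integral over $B$ of any odd integrable function $\phi$ vanishes, because $\int_B\phi(x)\,dx=\int_B\phi(-x)\,dx=-\int_B\phi(x)\,dx$. This gives $\int_B F_{\rm even}G_{\rm odd}=\int_B F_{\rm odd}G_{\rm even}=0$ and yields \eqref{evenodd}.

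An equivalent route that avoids splitting into four pieces, and that I find cleaner to record, is to verify directly the pointwise algebraic identity
\[
F_{\rm even}(x)G_{\rm even}(x)+F_{\rm odd}(x)G_{\rm odd}(x)=\tfrac12\big[F(x)G(x)+F(-x)G(-x)\big],
\]
which follows immediately by expanding the definitions of the even and odd parts and cancelling the mixed products $F(x)G(-x)$ and $F(-x)G(x)$. Integrating this identity over $B$ and applying the change of variables $x\mapsto-x$ to the term involving $F(-x)G(-x)$ (again using the symmetry of $B$) shows $\int_B F(-x)G(-x)\,dx=\int_B F(x)G(x)\,dx$, and \eqref{evenodd} follows at once.

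The only point requiring care is integrability, since the product of two merely locally integrable functions need not be integrable on $B$. In the intended applications $FG$ will be absolutely integrable over $B$, and in that case so is $F(-x)G(-x)$, hence by the pointwise identity so are the two diagonal products; all the manipulations above are then legitimate. The identity should therefore be read as holding whenever the integral on the left (equivalently, on the right) is absolutely convergent, which is the only setting in which it is applied below. I expect this integrability bookkeeping to be the sole, and very mild, obstacle.
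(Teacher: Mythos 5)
Your core computation is the intended one: the paper states this lemma without proof, calling it elementary, and both of your routes are correct algebra combined correctly with the symmetry of $B$ under $x\mapsto -x$ — expanding $FG$ into four terms and killing the two odd cross terms by reflection, or integrating the pointwise identity $F_{\rm even}G_{\rm even}+F_{\rm odd}G_{\rm odd}=\tfrac12\big[F(x)G(x)+F(-x)G(-x)\big]$.

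However, your closing integrability remark contains a genuine error. Absolute convergence of $\int_B FG$ does \emph{not} imply that of the two diagonal integrals on the right, because the pointwise identity controls only their \emph{sum}: the difference is $F_{\rm even}G_{\rm even}-F_{\rm odd}G_{\rm odd}=\tfrac12\big[F(x)G(-x)+F(-x)G(x)\big]$, which involves the \emph{reflected} products, and these are not controlled by $FG$. Concretely, in $d=1$ with $B=(-1,1)$, take $F(x)=x^{-1/2}\chi_{(0,1)}(x)$ and $G(x)=|x|^{-1/2}\chi_{(-1,0)}(x)$: both are locally integrable and $FG\equiv 0$ (disjoint supports), yet for $0<|x|<1$ one computes $F_{\rm even}(x)G_{\rm even}(x)=\tfrac14|x|^{-1}$ and $F_{\rm odd}(x)G_{\rm odd}(x)=-\tfrac14|x|^{-1}$, so each right-hand integral diverges (and so do your cross terms, so the first route breaks at the same point). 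Hence your parenthetical ``equivalently, on the right'' is false as stated. The correct hypothesis is that $x\mapsto F(x)G(-x)$ also be integrable on $B$ — equivalently, that all four products in your expansion converge absolutely — which holds, for instance, whenever one of $F$, $G$ is bounded on $B$. That is exactly the situation in the paper's application: the lemma is used at each fixed $s$ with $F(z)=h_s(x_0,x_0+z)$, bounded and continuous on $\{|z|<\delta\}$, and $G(z)=f(x_0+z)-f(x_0)$ locally integrable, so under that (implicitly assumed) hypothesis your proof is complete.
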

We also introduce the notation
\[
\triangle^1_zf(x)=f(x+z)-f(x-z),\mand \triangle^2_zf(x)=f(x+z)+f(x-z)-2f(x).
\]
Observe that, after dividing by 2, these expressions are respectively
the odd and even parts of the function $z\mapsto f(x+z)-f(x)$.

\

\Proofof{Proposition \ref{Kernel prop.}}
Changing variables $y=x_0+z$ in \eqref{Idtf}, we can write
\[
\cI_\delta f(x_0,s)=\int_{|z|<\delta}h_s(x_0,x_0+z)\,\big[f(x_0+z)-f(x_0)\big]dz.
\]
Then, using the identity in \eqref{evenodd} and simple manipulations, we can rewrite this expression as
\Bea
\cI_\delta f(x_0,s)
& = &  \tfrac12\int_{|z|<\delta} h_s(x_0,x_0+z)\triangle_{z}^{2}f(x_0) dz\nonumber\\
& & \hskip-1cm+\tfrac14\int_{|z|<\delta}\big(h_s(x_0,x_0+z)-h_s(x_0,x_0-z)\big)
\,\triangle_{z}^{1}f(x_0) dz\label{hDt1}
\Eea
Thus, using the kernel assumptions in \eqref{C.Tamano} and \eqref{C.Suavidad}, we clearly
have
\Beas
\int_{0}^{A} |\cI_\delta f(x_0,s)|\,\frac{ds}{s^{1+\sigma}}\,&\leq & \tfrac12\int_{|z|<\delta}|\triangle_{z}^{2}f(x_0)|
\Big(\int_{0}^{A}h_s(x_0,x_0+z) \,\frac{ds}{s^{1+\sigma}}\Big)dz\\
&&\hskip-2cm+\tfrac14 \int_{|z|<\delta}|\triangle_{z}^{1}f(x_0)|\,\int_{0}^{A}|h_s(x_0,x_0+z)-h_s(x_0,x_0-z)|\,\frac{ds}{s^{1+\sigma}}dz\\
&\lesssim &  %c(x_0)\,\Big(
\int_{|z|<\delta}\frac{|\triangle_{z}^{2}f(x_0)|}{|z|^{d+2\sigma}}\,dz\,+\,
\int_{|z|<\delta}\frac{|\triangle_{z}^{1}f(x_0)|}{|z|^{(d+2\sigma-3)_+}}\,dz
%\Big)
\Eeas
which is a finite quantity when $f\in\Dsigs$.
\ProofEnd

\begin{Remark}\label{convolution kernel}
{\rm
When the heat kernel at a given $x_0$  satisfies 
\[
h_s (x_0,y)= \rho_{s,x_0} (|x_0-y|),
\]
for some function $\rho_{s,x_0}$ (for instance, if $h_s$ is of convolution type and radial), then the condition \eqref{C.Suavidad} is automatically satisfied (since the integrand is 0). Moreover, in the proof of the proposition the integral in \eqref{hDt1} vanishes, so no bound is needed involving $\triangle_z^1 f(x_0)$. Thus, in that setting, the conclusion \eqref{L_imply} of the proposition holds with the weaker smoothness assumption $f\in \Dsig$. %defined in \eqref{dini}.
}
\end{Remark}

\section{The  Hermite operator $L=-\Dt+|x|^2 + m$, with $m\geq-d$}
\label{S_Her}

In this section we specialize to the case when
\[
L=-\Dt+|x|^2 + m, \quad \mbox{with $m\geq-d$.}
\]
We recall the kernel expressions in this setting. For the heat kernel $h_t(x,y)$, associated with $e^{-tL}$, we have the Mehler formula
\[
h_t(x,y)=\,e^{-tm}\,\frac{e^{-\frac{|x-y|^2}{2\th{2t}}-\th{t} \,x\cdot y}}{[2\pi\sh{2t}]^{\frac d2}}\,,\quad t>0,\;x,y\in\SRd;
%\label{mehler}\Ee
\]
see e.g. \cite[(4.3.14)]{Than}. Changing variables to $t=t(s)=\frac12\ln(\frac{1+s}{1-s})$ (or equivalently, $s=\th(t)$), the kernel takes the form
\Be
h_{t(s)}(x,y)=\,\frac{(1-s)^{\frac{m+d}2}}{(1+s)^{\frac{m-d}2}}\, \frac{e^{-\frac14(\frac{|x-y|^2}s + s|x+y|^2)}}{(4\pi s)^{\frac d2}}\,.
\label{mehler2}
\Ee
In the next subsection we shall collect the decay and smoothness estimates of this kernel that will be needed in the proof of Theorem \ref{th1}.

\subsection{Kernel estimates}
\label{S_4.1}

Throughout this section we denote
\Be\label{cK}
\cK(x,y):=\int_0^A h_t(x,y)\,\frac{dt}{t^{1+\sigma}},\quad x,y\in\SRd,
\Ee
where we select $A>0$ so that $\th A=1/2$ (any other $A>0$ would also be fine). 
Performing the change of variables in \eqref{mehler2}
(so that $dt=\frac{ds}
{1-s^2}$) we obtain
\Be
\cK(x,y)=\int_0^{1/2}\,\frac{(1-s)^{\frac{m+d}2-1}\,e^{-\frac14(\frac{|x-y|^2}s+s|x+y|^2)}} {(1+s)^{\frac{m-d}2+1}\,(4\pi s)^{\frac
d2}\,\big(\frac12\ln\frac{1+s}{1-s}\big)^{1+\sigma}}\,ds.
\label{cKs}
\Ee
Observe that in this range of integration we have $\ln\frac{1+s}{1-s}\approx s$, and $1\pm s\approx 1$,
so $\cK(x,y)$ becomes comparable to
\Be
\cK_1(x,y)=\int_0^{1/2}\,\frac{e^{-\frac14(\frac{|x-y|^2}s+s|x+y|^2)}} {s^{1+\sigma+\frac d2}}\,ds.
\label{cKs1}
\Ee
The first lemma shows the decay condition in \eqref{C.Tamano}.
The argument in the proof is similar to the one used in \cite[(4.13)]{GHSTV}
(where a better estimate is obtained).

\begin{lemma}
\label{L_h1}
With the notation in \eqref{cKs}, for every $\sigma>0$ there exists $c=c(\sigma)>0$ such that
\[
\cK(x,y)\leq\frac{c}{|x-y|^{d+2\sigma}}, \quad \forall\;x,y\in\SR^d.
\]
\end{lemma}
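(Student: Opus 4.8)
The plan is to work with the comparable integral $\cK_1(x,y)$ from \eqref{cKs1} rather than with $\cK(x,y)$ itself, since the text has already recorded that $\cK(x,y)\approx\cK_1(x,y)$ on the relevant range $s\in(0,1/2)$ (where $\ln\frac{1+s}{1-s}\approx s$ and $1\pm s\approx1$). For an upper bound of the stated type the factor involving $|x+y|$ plays no role whatsoever: since $e^{-\frac{s}{4}|x+y|^2}\le1$, I would simply discard it and reduce to
\[
\cK_1(x,y)\le\int_0^{1/2}\frac{e^{-\frac{|x-y|^2}{4s}}}{s^{1+\sigma+\frac d2}}\,ds.
\]

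Next I would set $r=|x-y|$ and make the substitution $u=r^2/(4s)$, that is $s=r^2/(4u)$ and $ds=\frac{r^2}{4u^2}\,du$, which turns the integral over $s\in(0,1/2)$ into one over $u\in(r^2/2,\infty)$. A direct accounting of the powers of $r$ and $u$ then gives
\[
\int_0^{1/2}\frac{e^{-\frac{r^2}{4s}}}{s^{1+\sigma+\frac d2}}\,ds
=4^{\sigma+\frac d2}\,r^{-(d+2\sigma)}\int_{r^2/2}^\infty e^{-u}\,u^{\sigma+\frac d2-1}\,du.
\]

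The final step is to observe that the remaining integral is dominated, \emph{uniformly in $r>0$}, by the full Gamma integral: since $\sigma+\tfrac d2>0$ the integrand $u^{\sigma+\frac d2-1}$ is locally integrable at the origin, so
\[
\int_{r^2/2}^\infty e^{-u}\,u^{\sigma+\frac d2-1}\,du\le\int_0^\infty e^{-u}\,u^{\sigma+\frac d2-1}\,du=\Gamma\big(\sigma+\tfrac d2\big)<\infty.
\]
This yields $\cK_1(x,y)\le 4^{\sigma+\frac d2}\,\Gamma\big(\sigma+\tfrac d2\big)\,|x-y|^{-(d+2\sigma)}$, and hence the claim for $\cK(x,y)$ after absorbing the comparability constant into $c(\sigma)$.

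I do not expect any genuine obstacle here: the estimate is entirely elementary once the $|x+y|$ factor has been discarded, and the only point worth keeping in mind is that the resulting incomplete Gamma integral must be controlled uniformly in $|x-y|$, which it is, both for small separations (where the lower limit $r^2/2$ tends to $0$ and convergence at the origin is ensured by $\sigma+\tfrac d2>0$) and for large separations (where the integral is only smaller). The discarded factor $e^{-s|x+y|^2/4}$ is precisely what one would retain to obtain the sharper two-variable decay of \cite[(4.13)]{GHSTV}, but it is superfluous for the one-sided bound stated in this lemma.
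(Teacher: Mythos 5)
Your proof is correct and follows essentially the same route as the paper's: the same substitution $u=|x-y|^2/(4s)$ applied to $\cK_1$, followed by bounding the resulting incomplete Gamma integral by $\Gamma\big(\sigma+\tfrac d2\big)$. The only difference is cosmetic — you discard the factor $e^{-s|x+y|^2/4}\le 1$ before the change of variables, while the paper carries it through the substitution and drops it afterwards.
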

\begin{proof}
Changing variables $u=\frac{|x-y|^2}{4s}$ in \eqref{cKs1} we see that
\Beas
\cK(x,y) & \approx & \cK_1(x,y)=\Big(\frac4{|x-y|^2}\Big)^{\sigma+\frac d2}\,\int_{\frac{|x-y|^2}2}^\infty e^{-u}\,e^{-\frac{|x+y|^2\,|x-y|^2}{16u}}\, u^{\sigma+\frac d2}\,\frac{du}u\\
& \leq & \frac{4^{\sigma+\frac d2}}{|x-y|^{d+2\sigma}}\,\int_0^\infty e^{-u}\, u^{\sigma+\frac d2}\,\frac{du}u=
\frac{c}{|x-y|^{d+2\sigma}}.
\Eeas
\end{proof}
\BR\label{R_Wt}
This lemma may also be proved directly from \eqref{cK} using the property 
\[
h_t(x,y)\lesssim t^{-d/2}e^{-c\frac{|x-y|^2}{t}}, \quad 0<t\lesssim1.
\]
This property is known to hold for many other operators $L$.
\ER

We now show the smoothness condition in \eqref{C.Suavidad}.

\begin{lemma}
\label{L_h2}
For every $\sigma>0$, there exists $c=c(\sigma)>0$ such that 
\Be\label{dht}
\int_0^A|h_t(x,x+y)-h_t(x,x-y)|\,\frac{dt}{t^{1+\sigma}}\leq\,\frac{c\,|x|}{|y|^{(d+2\sigma-3)_+}}, \quad \forall\;x\in\SR^d,\;|y|\leq1.
\Ee
\end{lemma}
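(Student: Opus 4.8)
The plan is to pass to the variable $s=\th t$, as in \eqref{cKs}, so that (using $dt=ds/(1-s^2)$, $t\approx s$ and $1\pm s\approx1$ on $[0,1/2]$) the claimed bound reduces to estimating
\[
\int_0^{1/2}\big|h_{t(s)}(x,x+y)-h_{t(s)}(x,x-y)\big|\,\frac{ds}{s^{1+\sigma}}.
\]
The key algebraic observation is that in \eqref{mehler2} both points $x+y$ and $x-y$ lie at distance $|y|$ from $x$, so the term $e^{-|x-\cdot|^2/(4s)}$ produces the \emph{same} factor $e^{-|y|^2/(4s)}$ in both kernels; it therefore factors out of the difference, which becomes
\[
h_{t(s)}(x,x+y)-h_{t(s)}(x,x-y)=\frac{C(s)\,e^{-|y|^2/(4s)}}{(4\pi s)^{d/2}}\Big(e^{-\frac s4|2x+y|^2}-e^{-\frac s4|2x-y|^2}\Big),
\]
with $C(s)=(1-s)^{(m+d)/2}(1+s)^{-(m-d)/2}\approx1$ for $s\in[0,1/2]$.

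Next I would estimate the difference of exponentials by the elementary bound $|e^{-a}-e^{-b}|\le|a-b|$, valid for $a,b\ge0$. Since $|2x+y|^2-|2x-y|^2=8\,x\cdot y$, this gives
\[
\Big|e^{-\frac s4|2x+y|^2}-e^{-\frac s4|2x-y|^2}\Big|\le 2s\,|x\cdot y|\le 2s\,|x|\,|y|,
\]
which is precisely where the factor $|x|$ in the statement comes from. Combining, the integrand is controlled by $|x|\,|y|\,e^{-|y|^2/(4s)}\,s^{-(d/2-1)}$, and after substituting $u=|y|^2/(4s)$ (exactly as in the proof of Lemma \ref{L_h1}) the whole integral is bounded by
\[
\frac{c\,|x|}{|y|^{d+2\sigma-3}}\int_{|y|^2/2}^\infty e^{-u}\,u^{\frac d2+\sigma-2}\,du.
\]

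It remains to analyze this upper incomplete Gamma integral, and this case distinction is the real work. Writing $\beta=\frac d2+\sigma-1$ (so the exponent is $d+2\sigma-3=2\beta-1$ and the lower limit is $\approx|y|^{2\beta}$), there are three regimes. If $\beta>0$ (that is, $d+2\sigma>2$) the integral is bounded by $\Ga(\beta)$; then for $d+2\sigma>3$ the target exponent $2\beta-1$ appears directly, while for $2<d+2\sigma\le3$ one has $2\beta-1\le0$ and $|y|^{-(2\beta-1)}\le1$ for $|y|\le1$, yielding the bound $c|x|$ expected from $(d+2\sigma-3)_+=0$. If $\beta\le0$ (that is, $d+2\sigma\le2$) the integrand is singular at the origin, and here the lower cutoff $|y|^2/2$ is \emph{essential}: it contributes a compensating factor $\approx|y|^{2\beta}$ (a logarithm $\log(2/|y|^2)$ in the borderline case $\beta=0$), which absorbs the singular prefactor $|y|^{1-2\beta}$ and leaves $c\,|x|\,|y|\le c\,|x|$, again matching $(d+2\sigma-3)_+=0$. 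The main obstacle is thus not any single estimate but the bookkeeping that ties together these three regimes; the heart of the matter is the cancellation of the $e^{-|y|^2/(4s)}$ factor and the linear-in-$|x|$ gain from the mean-value bound on the remaining exponentials.
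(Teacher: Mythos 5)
Your proof is correct and follows essentially the same route as the paper's: the same change of variables $s=\th t$, the same factoring out of the common Gaussian $e^{-|y|^2/(4s)}$, a linear-in-$|x|$ bound on the difference of the remaining exponentials, the substitution $u=|y|^2/(4s)$ leading to the same incomplete Gamma integral, and the same case analysis (your regime $\beta\le 0$ is precisely the paper's special case $d=1$, $\sigma\le 1/2$). The only cosmetic difference is that you control the exponential difference via the mean value theorem together with $|2x+y|^2-|2x-y|^2=8\,x\cdot y$, whereas the paper invokes its Lemma \ref{L_ab}; both give the same bound $2s|x|\,|y|$, since the paper discards the extra exponential factor from that lemma anyway.
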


In the proof of \eqref{dht} we shall use the following elementary inequality.

\begin{lemma}\label{L_ab}
If $x,y\in\SRd$ then
\[
\Big|e^{-|x+y|^2}-e^{-|x-y|^2}\Big|\leq 4|x|\,|y|
\;e^{-\min|x\pm y|^2}.
\]
\end{lemma}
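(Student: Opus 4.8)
The plan is to treat this as a one-dimensional Lipschitz estimate for the scalar function $t\mapsto e^{-t}$, combined with an elementary algebraic identity. Writing $a=|x+y|^2$ and $b=|x-y|^2$, the quantity to bound is just $|e^{-a}-e^{-b}|$, so everything reduces to controlling the separation $|a-b|$ together with the size of $e^{-t}$ on the interval between $a$ and $b$.

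First I would record the polarization identity $a-b=|x+y|^2-|x-y|^2=4\,x\cdot y$, so that by the Cauchy--Schwarz inequality $|a-b|=4|x\cdot y|\leq 4|x|\,|y|$. This is precisely the source of the factor $4|x|\,|y|$ in the claimed bound.

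Next I would apply the fundamental theorem of calculus (equivalently, the mean value theorem) to the decreasing function $t\mapsto e^{-t}$:
\[
|e^{-a}-e^{-b}|=\Big|\int_{\min(a,b)}^{\max(a,b)} e^{-t}\,dt\Big|\leq |a-b|\,\sup_{t\in[\min(a,b),\,\max(a,b)]} e^{-t}.
\]
Since $e^{-t}$ is monotone decreasing, its supremum over that interval is attained at the left endpoint, giving $e^{-\min(a,b)}=e^{-\min|x\pm y|^2}$. Combining this with the previous estimate on $|a-b|$ yields exactly $|e^{-|x+y|^2}-e^{-|x-y|^2}|\leq 4|x|\,|y|\,e^{-\min|x\pm y|^2}$, as desired.

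There is essentially no obstacle here; the only point requiring a moment's care is to exploit the monotonicity of $e^{-t}$ in the correct direction, so that the exponential factor carries the \emph{minimum} of $|x\pm y|^2$ (equivalently, the larger of the two values $e^{-a},e^{-b}$) rather than a larger exponent. The resulting inequality is sharp up to the constant and is exactly the form needed to supply the smoothness condition \eqref{C.Suavidad} in the proof of Lemma \ref{L_h2}.
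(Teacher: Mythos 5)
Your proof is correct and is essentially the paper's own argument: both hinge on the identity $|x\pm y|^2=|x|^2+|y|^2\pm 2x\cdot y$ (so the two exponents differ by $4x\cdot y$, bounded by $4|x|\,|y|$ via Cauchy--Schwarz) together with a mean-value/integral bound that attaches the exponential weight to the \emph{smaller} exponent. The paper merely packages that last step differently, factoring out $e^{-|x|^2-|y|^2}$ and using $e^t-e^{-t}=\int_{-t}^t e^s\,ds\le 2te^t$ with $t=2|x\cdot y|$, which after recombining the factors is exactly your estimate $|e^{-a}-e^{-b}|\le|a-b|\,e^{-\min(a,b)}$.
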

\begin{proof}
Using $|x\pm y|^2=|x|^2+|y|^2\pm2x\cdot y$ we can write
\[
\Big|e^{-|x+y|^2}-e^{-|x-y|^2}\Big|=e^{-|x|^2-|y|^2}\,\Big|e^{2x\cdot y}-e^{-2x\cdot y}\Big|.
%=e^{-|x|^2-|y|^2}\,\Big|2\sh(2x\cdot y)\Big|.
\]
Now, letting $t=2|x\cdot y|$, and using the inequality
\[
e^t-e^{-t}=\int^t_{-t}e^s\,ds\leq2te^t, 
\]
we obtain
\[
\Big|e^{-|x+y|^2}-e^{-|x-y|^2}\Big|\leq 4|x|\,|y|\,e^{2|x\cdot y|}\,e^{-|x|^2-|y|^2}= 4|x|\,|y|\,e^{-\min|x\pm y|^2}.
\]
\end{proof}

\Proofof{Lemma \ref{L_h2}}
Denote by $\cK'(x,y)$ the left hand side of \eqref{dht}.
Then, performing the change of variables in \eqref{mehler2}, and disregarding the inessential terms (as discussed before \eqref{cKs1}) we obtain
\Beas
\cK'(x,y) & \approx & \int_0^{1/2}\frac{e^{-\frac{|y|^2}{4s}}\,\big|e^{-\frac s4|2x+y|^2}- e^{-\frac s4|2x-y|^2}\big|}
{s^{1+\sigma+\frac d2}}\,ds \\
\mbox{{\footnotesize (by Lemma \ref{L_ab})}} & \lesssim & 
\int_0^{1/2}\frac{e^{-\frac{|y|^2}{4s}}\,s\,|2x|\,|y|}
{s^{1+\sigma+\frac d2}}\,ds\;\lesssim\; |x|\,|y|\,\int_0^{1/2}\frac{e^{-\frac{|y|^2}{4s}}}
{s^{\sigma+\frac d2}}\,ds\\
\mbox{{\footnotesize ($u=|y|^2/(4s)$)}} & = & 
\frac{c\,|x|\,|y|}{|y|^{d+2\sigma-2}}\,\int_{|y|^2/2}^\infty e^{-u}\,u^{\sigma+\frac d2-1}\,\frac{du}u\;\leq\;
\frac{c'\,|x|}{|y|^{d+2\sigma-3}},
\Eeas
the last bound being valid when $\sigma+d/2-1>0$. This is always the case when $d\geq2$ and $\sigma>0$,
and also if $d=1$ and $\sigma>1/2$. 

In the special case that $d=1$ and $\sigma\in(0,1/2]$, one observes that the integral
\[
I(y):=\int_{|y|^2/2}^\infty e^{-u}\,u^{\sigma-\frac 12}\,\frac{du}u\;\approx \left\{\Ba{lll}
\log(e/|y|), & & \mbox{if $\sigma=1/2$}\\
{|y|^{2\sigma-1}},&& \mbox{if $\sigma\in(0,1/2)$},
\Ea\right.
\] 
when  $|y|\leq1$. Inserting this into the above estimates, it leads to
\[
\cK'(x,y)\lesssim \left\{\Ba{lll}
|x|\,|y|\,\log(e/|y|), & & \mbox{if $\sigma=1/2$}\\
|x|\,|y|,&& \mbox{if $\sigma\in(0,1/2)$},
\Ea\right.
\] 
which implies
\[
 \cK'(x,y)\lesssim|x|, \quad \forall\;|y|\leq1.
\]
Note that this matches \eqref{dht} in the special case $d=1$ and $\sigma\leq 1/2$.
\ProofEnd

Our last result is a strengthening of the decay estimate in Lemma \ref{L_h1} when $|y|\gg|x|$.
The proof follows a similar reasoning as in  \cite[Lemma 4.2]{GHSTV}.

\begin{lemma}    \label{L_h3}
Let $\sigma>0$. Then there exist $c=c(\sigma)>0$ and $\ga>0$ such that
%\Be\label{Kga}
\[
\cK(x,y)\leq c\, e^{-(\frac12+\ga)|y|^2},\quad \mbox{when $|y|\geq 10\,\max\{|x|,1\}$. }
\]
\end{lemma}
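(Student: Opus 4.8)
The plan is to reduce the estimate to the model integral $\cK_1(x,y)$ in \eqref{cKs1}, which was shown to be comparable to $\cK(x,y)$, and then to extract the factor $e^{-(\frac12+\ga)|y|^2}$ from its exponent while diverting only a thin slice of the same exponent to absorb the singular weight $s^{-(1+\sigma+d/2)}$ at $s=0$. First I would record the geometric consequences of the hypothesis $|y|\geq 10\max\{|x|,1\}$: it gives $|x|\leq|y|/10$ and $|y|\geq 10$, whence by the triangle inequality $\tfrac{9}{10}|y|\leq|x\pm y|\leq\tfrac{11}{10}|y|$; in particular $|x-y|^2,|x+y|^2\geq\tfrac{81}{100}|y|^2$, $|x-y|\geq 9$, and the ratio $|x-y|/|x+y|\geq 9/11>\tfrac12$. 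Write $\Psi(s)=\tfrac14\big(\tfrac{|x-y|^2}{s}+s|x+y|^2\big)$ for the exponent appearing in $\cK_1$.

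Next I would fix a small $\eta>0$ and split $\Psi(s)=\eta\tfrac{|x-y|^2}{s}+\Psi_\eta(s)$, where $\Psi_\eta(s)=\tfrac{(1/4-\eta)|x-y|^2}{s}+\tfrac{s|x+y|^2}{4}$. The first piece tames the singular factor, since extending the range to $(0,\infty)$ gives
\[
\int_0^{1/2}\frac{e^{-\eta|x-y|^2/s}}{s^{1+\sigma+d/2}}\,ds\;\leq\;\Gamma\!\big(\sigma+\tfrac d2\big)\,\big(\eta|x-y|^2\big)^{-(\sigma+d/2)},
\]
which, because $|x-y|\geq 9$, is bounded by a constant depending only on $\sigma,d,\eta$. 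For the second piece, $\Psi_\eta(s)=\tfrac As+Bs$ is convex with minimizer $s_*=\sqrt{1-4\eta}\,|x-y|/|x+y|$; since $|x-y|/|x+y|>\tfrac12$, for $\eta$ small we have $s_*>\tfrac12$, so $\Psi_\eta$ is decreasing on $(0,\tfrac12]$ and attains its minimum over that interval at $s=\tfrac12$, equal to $(\tfrac12-2\eta)|x-y|^2+\tfrac18|x+y|^2$.

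Finally I would bound this minimum from below. Using $|x\pm y|^2\geq\tfrac{81}{100}|y|^2$,
\[
\big(\tfrac12-2\eta\big)|x-y|^2+\tfrac18|x+y|^2\;\geq\;\Big[\big(\tfrac12-2\eta\big)+\tfrac18\Big]\tfrac{81}{100}|y|^2\;=\;\big(\tfrac58-2\eta\big)\tfrac{81}{100}|y|^2.
\]
Since $\tfrac58\cdot\tfrac{81}{100}=\tfrac{81}{160}>\tfrac12$, for $\eta$ small enough the right-hand side exceeds $(\tfrac12+\ga)|y|^2$ for some $\ga>0$. Thus $\Psi(s)\geq\eta|x-y|^2/s+(\tfrac12+\ga)|y|^2$ on $(0,\tfrac12]$, and pulling $e^{-(\frac12+\ga)|y|^2}$ out of $\cK_1$ leaves the bounded integral above, yielding $\cK(x,y)\approx\cK_1(x,y)\leq c\,e^{-(\frac12+\ga)|y|^2}$, as claimed.

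I expect the \emph{main obstacle} to be the slim margin in the decisive inequality $\tfrac58\cdot\tfrac{81}{100}>\tfrac12$: in the worst-case direction (with $|x-y|$ and $|x+y|$ both near $\tfrac9{10}|y|$) the threshold is barely cleared, which is precisely why the constant $10$ appears in the hypothesis. Consequently one cannot use the lossy AM-GM bound $\Psi(s)\geq\tfrac12|x-y|\,|x+y|$, whose value can be as small as $\tfrac{81}{200}|y|^2<\tfrac12|y|^2$ and hence produces no Gaussian gain; the constraint $s\leq\tfrac12$ must instead be exploited through the monotonicity of $\Psi_\eta$ on $(0,\tfrac12]$, and only an arbitrarily small slice $\eta|x-y|^2/s$ may be diverted to control the singularity without destroying that gain.
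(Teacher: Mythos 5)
Your proof is correct and follows essentially the same route as the paper: you divert an $\eta$-slice of the term $|x-y|^2/(4s)$ to absorb the singularity $s^{-(1+\sigma+d/2)}$, bound the remaining exponent below by $(\tfrac12+\ga)|y|^2$ using $|x\pm y|\geq\tfrac9{10}|y|$ together with the constraint $s\leq\tfrac12$, and finish with a Gamma-function bound on the leftover integral. The only difference is cosmetic: where you locate the minimizer of $A/s+Bs$ and evaluate at the endpoint $s=\tfrac12$, the paper uses the equivalent scalar inequality $s+\tfrac1s\geq\tfrac52$ on $(0,\tfrac12]$, and both arguments hinge on exactly the same numerical margin $\tfrac{81}{160}>\tfrac12$.
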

\begin{proof}
For simplicity denote $a=|x+y|$ and $b=|x-y|$. Note that, the condition $|y|\geq 10|x|$ implies
\[
a^2, b^2 \geq \big(\tfrac9{10}\big)^2\,|y|^2.
\]
Given a small $\eta\in(0,1)$ (to be determined) we have, for all $s\in(0,1/2)$
\Beas
e^{-\frac14(sa^2+\frac{b^2}s)} & = & e^{-\frac{\eta b^2}{4s}}\,e^{-\frac14(sa^2+(1-\eta)\frac{b^2}s)} \\
& \leq &  e^{-\frac{\eta b^2}{4s}}\,e^{-\frac14\,\big(\frac9{10}\big)^2\,|y|^2\,\big(s+(1-\eta)\frac1s\big)} \\
& \leq &  e^{-\frac{\eta b^2}{4s}}\,e^{-\frac14\,\big(\frac9{10}\big)^2\,|y|^2\,(1-\eta)\,\frac52},
\Eeas
using that $s+\frac1s\geq 5/2$ when $s\in(0,1/2)$. Note that, if $\eta>0$ is chosen sufficiently small,
we can find some $\ga>0$ such that
\[
\tfrac14\,\big(\tfrac9{10}\big)^2\,(1-\eta)\,\tfrac52\,>\,\tfrac12+\ga.
\]
So we have
\[
e^{-\frac14(sa^2+\frac{b^2}s)} \leq 
e^{-\frac{\eta b^2}{4s}}\,e^{-(\frac12+\ga)\,|y|^2},\quad s\in(0,1/2).
\]
Thus, inserting these estimates into  \eqref{cKs1}, we obtain
\[
\cK(x,y)\approx \cK_1(x,y)\leq e^{-(\frac12+\ga)\,|y|^2}\,
\int_0^{1/2}e^{-\frac{\eta |x-y|^2}{4s}}\,\frac{ds}{s^{1+\sigma+\frac d2}}.
\]
Finally, in the last integral we perform the change of variables $u=\frac{\eta |x-y|^2}{4s}$
and obtain 
\[
\cK(x,y)\lesssim  \frac{e^{-(\frac12+\ga)\,|y|^2}}{|x-y|^{2\sigma+d}}\,
\int_0^{\infty}e^{-u}\,u^{\sigma+\frac d2}\,\tfrac{du}u\lesssim e^{-(\frac12+\ga)\,|y|^2},
\]
using in the last step that $|x-y|\approx|y|\geq1$, under the conditions in the statement.
\end{proof}

\subsection{Regular positive eigenvectors}
\label{S_4.2}

\begin{defi}\label{def_eig}{\rm
We say that $\psi(x)\in\Dom(L)$ is a \emph{regular positive eigenvector} of $L$ if
\Benu
\item[(a)] $\psi\in C^\infty(\SRd)$
\item[(b)] $\psi(x)>0$, $\forall\,x\in\SR^d$
\item[(c)] $L(\psi)=\la\psi$, for some $\la\geq0$.
\Eenu
}
\end{defi}
When $L=-\Dt+|x|^2+m$, it is elementary to find an explicit regular positive eigenvector, namely
\[
\psi(x)=e^{-|x|^2/2}.
\]
Indeed, it is easily verified that $L(\psi)=\la\psi$ with $\la=m+d\geq0$.

\

We have the following simple lemma, which is  valid for general operators $L$.

\begin{lemma}
\label{L_psi}
Let $\psi$ be a regular positive eigenvector of $L$. Then, for all $\sigma\in(0,1)$
and all $x\in\SR^d$ it holds
\[
\int_0^\infty\Big|e^{-tL}\psi(x)-\psi(x)\Big|\,\frac{dt}{t^{1+\sigma}}\,<\,\infty.
\]
That is, $x\in\cA_\psi(L^\sigma)$, for all $x\in\SRd$.
\end{lemma}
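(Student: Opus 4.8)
The plan is to reduce everything to the scalar behaviour of $t\mapsto e^{-t\la}$, exploiting that $\psi$ is a genuine eigenvector rather than an arbitrary smooth positive function. First I would use property (c) in Definition \ref{def_eig} to argue that
\[
e^{-tL}\psi(x)=e^{-t\la}\,\psi(x),\qquad t>0,\;x\in\SRd.
\]
At the level of $L_2(\SRd)$ this is immediate from functional calculus; in the general semigroup setting it follows from uniqueness of solutions of $\partial_t u=-Lu$ with $u(0)=\psi$, since both $e^{-tL}\psi$ and $e^{-t\la}\psi$ solve this Cauchy problem (for the latter, $\partial_t(e^{-t\la}\psi)=-\la e^{-t\la}\psi=-e^{-t\la}L\psi=-L(e^{-t\la}\psi)$, using linearity of $L$). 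To upgrade the identity to hold at every point $x$, I would note that both sides are continuous functions of $x$ — the left side because $e^{-tL}\psi(x)=\int h_t(x,y)\psi(y)\,dy$ is smooth by the regularity of the kernel, the right side because $\psi\in C^\infty(\SRd)$ — so equality a.e. forces equality everywhere.

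Granting this, the integrand factorizes as
\[
\big|e^{-tL}\psi(x)-\psi(x)\big|=\big|e^{-t\la}-1\big|\,\psi(x),
\]
and, since $\psi(x)$ is a fixed finite constant for each $x$, it remains to verify that the purely scalar integral $\int_0^\infty \big|e^{-t\la}-1\big|\,t^{-1-\sigma}\,dt$ is finite. If $\la=0$ the integrand vanishes identically and there is nothing to prove, so I would assume $\la>0$ and split at $t=1$. Near $t=0$ the elementary bound $|e^{-t\la}-1|\le \la t$ gives an integrand $\lesssim \la\, t^{-\sigma}$, integrable on $(0,1)$ precisely because $\sigma<1$; near $t=\infty$ the bound $|e^{-t\la}-1|\le 1$ gives an integrand $\le t^{-1-\sigma}$, integrable on $(1,\infty)$ because $\sigma>0$. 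Combining the two pieces yields the claimed finiteness, hence $x\in\cA_\psi(L^\sigma)$ for every $x\in\SRd$.

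The only genuine point requiring care is the passage from the $L_2$ eigenvalue identity to its pointwise version; everything else is a one-variable estimate exploiting the range $\sigma\in(0,1)$. In the concrete Hermite case, where $\psi(x)=e^{-|x|^2/2}$ and $\la=m+d$, this pointwise identity can alternatively be confirmed by a direct computation against the Mehler kernel \eqref{mehler2}, which removes any abstract functional-calculus input and makes the argument completely self-contained.
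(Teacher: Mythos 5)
Your proposal is correct and is essentially the paper's own argument: both reduce via the identity $e^{-tL}\psi=e^{-t\lambda}\psi$ to the scalar integral $\psi(x)\int_0^\infty|e^{-t\lambda}-1|\,t^{-1-\sigma}\,dt$ and conclude with the elementary bound $|e^{-t\lambda}-1|\le\min\{\lambda t,\,2\}$ (your splitting at $t=1$ is the same estimate in disguise), handling $\lambda=0$ trivially. The only difference is that you spend a paragraph justifying the pointwise (rather than a.e.) validity of the eigenvalue identity, a point the paper simply takes for granted; this is a reasonable extra care but not a different method.
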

\begin{proof}
Since $e^{-tL}\psi=e^{-t\la}\psi$,
the result is clear if $\la=0$. If $\la>0$, then 
we have 
\Be\label{aux_psi}
\int_0^\infty\Big|e^{-tL}\psi(x)-\psi(x)\Big|\,\frac{dt}{t^{1+\sigma}}=\psi(x)\,\int_0^\infty|e^{-t\la}-1|\,\frac{dt}{t^{1+\sigma}}.
\Ee
Now, from the elementary estimate \[
|e^{-t\la}-1|=\Big|\int_0^t\,\la\,e^{-s\la}\,ds\Big|\leq \min\{\la\,t, 2\}.
\]
one deduces that \eqref{aux_psi} is a finite expression when $\sigma\in(0,1)$.
\end{proof}

\BR In this paper we shall not pursue this notion with other operators $L$, but it is well-known
that such eigenvectors exist when $L=-\Dt+V(x)$, under very general conditions on $V(x)$; see e.g. \cite[Theorem 11.8]{LiebLoss}.
\ER

\subsection{Proof of Theorem \ref{th1}}
\label{S_4.3}

In this section we prove Theorem \ref{th1} for the Hermite operator
\[
L=-\Dt+|x|^2+m.
\]
That is, if $\sigma\in(0,1)$ and $\Phi_\sigma$ is given as in \eqref{phi}, we must show that, for $x=x_0$,
\[
\int_0^\infty\Big|e^{-tL}f(x)-f(x)\Big|\,\frac{dt}{t^{1+\sigma}}\,<\,\infty,
\]
under the conditions $f\in L_1(\Phi_\sigma)$ and $f\in\Dsx$. In that case, the assertions in the theorem will follow directly from Proposition \ref{mainPr}.
In view of Lemma \ref{L_Ainfty}, it suffices to show that 
\Be
\int_0^A\Big|e^{-tL}f(x)-f(x)\Big|\,\frac{dt}{t^{1+\sigma}}\,<\,\infty,
\label{aux_A}
\Ee
where $A>0$ can be chosen as in \S\ref{S_4.1}.

Let $\psi$ be a regular positive eigenvector for $L$, as described in \S\ref{S_4.2}. Since $\psi(x)>0$, a multiplication by this number does not affect the finiteness of \eqref{aux_A}. Now we have
\Bea
J & := & \psi(x)\,\int_0^A\Big|e^{-tL}f(x)-f(x)\Big|\,\frac{dt}{t^{1+\sigma}}\label{aux_J}\\
& = &  
\int_0^A\Big|\big(e^{-tL}f\big)(x)\psi(x)-f(x)\psi(x)\Big|\,\frac{dt}{t^{1+\sigma}}\nonumber\\
& \leq&   \int_0^A\Big|\big(e^{-tL}f\big)(x)\psi(x)-\big(e^{-tL}\psi\big)(x)f(x)\Big|\,\frac{dt}{t^{1+\sigma}}\nonumber\\
& & \quad + |f(x)|\,\int_0^A\Big|\big(e^{-tL}\psi\big)(x)-\psi(x)\Big|\,\frac{dt}{t^{1+\sigma}}\,=\,J_1+J_2.\nonumber
\Eea
Note that $J_2<\infty$ by Lemma \ref{L_psi}, so we must only prove the finiteness of $J_1$. For that term, we have the following inequalities
\Beas
J_1 & = & \int_0^A\Big|\int_{\SRd}h_t(x,y)\big[f(y)\psi(x)-f(x)\psi(y)\big]\,dy\Big|\,\frac{dt}{t^{1+\sigma}}\\
& \leq & \psi(x)\,\int_0^A\Big|\int_{\SRd}h_t(x,y)\,\big[f(y)-f(x)\big]\,dy\Big|\,\frac{dt}{t^{1+\sigma}}\\
&& \quad +|f(x)|\,\int_0^A\Big|\int_{\SRd}h_t(x,y)\,\big[\psi(x)-\psi(y)\big]\,dy\Big|\,\frac{dt}{t^{1+\sigma}}\,=\,J_{11}+J_{12}.
\Eeas
The two summands, $J_{11}$ and $J_{12}$, can be treated similarly, since both functions $f$ and $\psi$ belong to
$L_1(\Phi_\sigma)\cap\Dsx$, by assumption\footnote{Actually, $\psi$ is much smoother than just $\Dsx$, so $J_{12}$ is formally
easier.}. So in the sequel we will just prove that $J_{11}<\infty$ and this will be enough to conclude the theorem. In fact,
since $\psi(x)>0$, it will suffice to show that
\[
\cJ_{11}:=\int_0^A\Big|\int_{\SRd}h_t(x,y)\,\big[f(y)-f(x)\big]\,dy\Big|\,\frac{dt}{t^{1+\sigma}}\,<\,\infty.
\]
At this point we let $\dt=11\max\{|x|,1\}$, and split the inner integral into the two regions $\{|y-x|<\dt\}$
and $\{|y-x|\geq\dt\}\subset\big\{|y|\geq10\max\{|x|,1\}\big\}$. So recalling the notation for $\cI_\delta f(x,t)$ in \eqref{Idtf} we have
\Be\label{aux_J11s}
\cJ_{11}\leq 
\int_{0}^{A} \big|\cI_\delta f(x,t)\big|\,\frac{dt}{t^{1+\sigma}}\,+\, \cJ_{11}^*,
\Ee
where
\Beas
\cJ_{11}^* & =& \int_0^A\int_{|y|\geq10\max\{|x|,1\}}h_t(x,y)\,\big|f(y)-f(x)\big|\,dy\,\frac{dt}{t^{1+\sigma}}\\
& = & \int_{|y|\geq10\max\{|x|,1\}}\,\big|f(y)-f(x)\big|\,
\cK(x,y)\,dy,
\Eeas
using this time the notation for $\cK(x,y)$ in \eqref{cK}.
By Lemma \ref{L_h3}, this last kernel has a gaussian decay, which leads to 
\[
\cJ_{11}^*\lesssim \int_{\SRd}\big(|f(x)|+|f(y)|\big)\,e^{-(\frac12+\ga)|y|^2}\,dy \lesssim |f(x)| +\int_{\SRd}|f(y)|\,\Phi_\sigma(y)\,dy,
\]
since, from the definition in \eqref{phi}, one has $e^{-(\frac12+\ga)|y|^2}\lesssim \Phi_\sigma(y)$ (actually, for all $\sigma>0$). Thus, the assumption $f\in L_1(\Phi_\sigma)$ gives $\cJ_{11}^*<\infty$, and hence, in view of \eqref{aux_J11s}, we have reduced matters to verify that
\[
\int_{0}^{A} \big|\cI_\delta f(x,t)\big|\,\frac{dt}{t^{1+\sigma}}<\infty.
\]
But this is precisely the condition $x\in\cA_f^{\rm loc}(L^\sigma)$ in Definition \ref{def_Aloc}. Now, in view of 
Proposition \ref{Kernel prop.}, this property is a consequence of the smoothness assumption $f\in\Dsx$, since the heat kernel $h_t(x,y)$
satisfies the two hypotheses in the proposition, \eqref{C.Tamano} and \eqref{C.Suavidad}, due to Lemmas \ref{L_h1} and \ref{L_h2}. This completes the proof of Theorem \ref{th1}.
\ProofEnd

\BR
When $x_0=0$, Theorem \ref{th1} holds with the weaker smoothness condition $f\in\Dsig$.
This is because of the observation in Remark \ref{convolution kernel}, since the heat kernel for the Hermite operator,
 $h_t(0,y)$, only depends on $|y|$; see \eqref{mehler2}. 
\ER

\section{The Ornstein-Uhlenbeck operator  $\cO=\,-\Dt+2x\cdot\nabla $}\label{S_OU}

\subsection{Proof of Theorem \ref{th2}}
We now turn to the proof of Theorem \ref{th2} for the operator
\[
\cO=\,-\Dt+2x\cdot\nabla,  
\]
which is positive and self-adjoint in $L_2(e^{-|y|^2}dy)$.
In this case, there is a well-known transference principle, see e.g. \cite[Prop 3.3]{AbuT06}, that reduces matters
 to the Hermite operator with $m=-d$, that is
\Be L=-\Dt+|x|^2-d,\quad\mbox{in $L_2(\SR^d)$}.\label{LHd}\Ee Indeed, if
we set $\tf(x)=e^{-\frac{|x|^2}2}f(x)$ then it is easily seen that $\cO
f(x)=e^{\frac{|x|^2}2}[L\tf](x)$. Thus,
\[ e^{-t\cO}f(x)\,=\,e^{\frac{|x|^2}2}\,e^{-tL}\tf(x)%[e^{\frac{-|y|^2}2}f](x),
\mand P_t^{\sigma,\cO}f(x)\,=\,e^{\frac{|x|^2}2}\,P_t^{\sigma, L}\tf(x), 
%\label{hpOU}\Ee
\]
so that the convergence properties of
$\cO$ and $L$ in \eqref{LHd} are linked by the mapping
$f\mapsto\tf$. Indeed, just observe that, if
\[
\Phi^{\cO}_\sigma(y)=\frac{e^{-|y|^2}}{[\ln(e+|y|)]^{\sigma}},
\]
then 
\Benu
\item[(i)] $f\in L_1(\Phi^\cO_\sigma)$ iff $\tf\in L_1(\Phi^L_\sigma)$
\item[(ii)] $\cO^\sigma(f)(x)=e^{|x|^2/2}\,L^\sigma(\tf)(x)$, as defined in \eqref{def_Ls}
\item[(iii)] $\Ds \lim_{t\to0^+}t^{1-2\sigma}\,\partial_t\big[P^{\sigma,\cO}_tf(x)\big]=
e^{\frac{|x|^2}2}\,\lim_{t\to0^+}t^{1-2\sigma}\,\partial_t\big[P^{\sigma,L}_t\tf(x)\big].$
\Eenu
Since we also have $f\in\Dsx$ iff $\tf\in\Dsx$, then Theorem \ref{th2} is an immediate consequence of Theorem \ref{th1}.

\ProofEnd

\section{Results for other operators $L$}\label{S_other}

 One can ask whether Theorem \ref{th1} continues to hold for other positive self-adjoint operators $L$. If one aims at \emph{optimal} integrability conditions on $f$, the first step would be to find a suitable function $\Phi^L_\sigma(y)$ such that
 \Be c_1(t,x)\, \Phi^L_\sigma(y) \,\leq\,p^{\sigma,L}_t(x,y)\,\leq\,
c_2(t,x)\, \Phi^L_\sigma(y) \;,\quad \forall\;y\in\SR^d, \label{pkL}\Ee 
 as stated in Lemma \ref{L1}. Such optimal estimates, for certain families of operators $L$,
have already been investigated by the authors (and their collaborators) in earlier papers. For instance, besides the already mentioned reference \cite{GHSTV} for Hermite type operators, we have also considered a large class of \emph{Laguerre type operators} $L$ in \cite{GHSV}, while the Bessel operators (in the case $\sigma=1/2$) were treated by I. Cardoso in \cite{Car}.  

In this paper we have tried to state our results in sufficient generality, so that one part of the arguments can be
applied to general operators $L$ (such as in \S\ref{pL}), while the other parts concern with specific estimates of the 
kernels $h_t(x,y)$ associated with $L$ (such as in \S\ref{S_4.1}). 
We remark that, following this line of reasoning, one can derive versions of Theorem \ref{th1} 
when $L$ is any the aforementioned Laguerre or Bessel operators; we expect to take up these matters in \cite{FGSV_p}.

\

In this section we illustrate this fact in just one specific but particularly simple case. 
For a fixed\footnote{In the sequel we shall not track the dependence of the constants on $R>0$.}  parameter $R>0$, we consider the perturbed Laplacian
\[
\SL=-\Dt+R.
\]
In this case, $e^{-t\SL}$ has a well-known convolution kernel
\Be\label{hR}
h_t(x,y)=e^{-tR}\,W_t(x-y), \quad \mbox{where $W_t(x)=(4\pi t)^{-d/2}\,e^{-\frac{|x|^2}{4t}}$}.
\Ee
It was also proved in \cite[\S5.2]{GHSTV}  that \eqref{pkL} does hold with
\Be
\Phi^{\SL}_\sigma(y):= \frac{e^{-\sqrt{R(1+|y|^2)}}}{(1+|y|)^{ \frac{d+1}2+\sigma}}.
%\frac{(1+\sqrt R|y|)^{\frac{d-1}2+\sigma}}{(1+|y|)^{ d+2\sigma}}e^{-\sqrt {R}\sqrt{1+|y|^2}}.
\label{PhiR}
\Ee
We now define a similar kernel as in \eqref{cK} (this time letting $A=\infty$)
\Be\label{cKR}
\cK(x,y):=\int_0^\infty h_t(x,y)\,\frac{dt}{t^{1+\sigma}}.
%=(4\pi)^{-d/2}\,\int_0^\infty e^{-tR}e^{-\frac{|x-y|^2}{4t}}\,\frac{dt}{t^{1+\sigma+\frac d2}}.
\Ee
\begin{lemma}\label{L_KR}
With the notation in 
\eqref{hR}, \eqref{PhiR} and \eqref{cKR}, if $x\in\SRd$, there exists $c(x)>0$ such that 
\[
\cK(x,y)\leq\,c(x)\,\Phi^\SL_\sigma(y), \quad \mbox{for all $|y|\geq 2\max\{|x|,1\}$.}
\]
\end{lemma}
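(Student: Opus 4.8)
The plan is to reduce $\cK(x,y)$ to a single integral in the time variable and extract its sharp Gaussian-type decay. Writing $r=|x-y|$ and inserting \eqref{hR} into \eqref{cKR} gives
\[
\cK(x,y)=(4\pi)^{-d/2}\int_0^\infty e^{-Rt-\frac{r^2}{4t}}\,\frac{dt}{t^{\frac d2+1+\sigma}}.
\]
Before estimating this, I would record two elementary consequences of the hypothesis $|y|\geq 2\max\{|x|,1\}$. Since $|x|\leq\tfrac12|y|$ and $|y|\geq2$, one has $\tfrac12|y|\leq r\leq\tfrac32|y|$, so that $r\approx 1+|y|$ and in particular $r\geq1$; moreover $\sqrt{1+|y|^2}-r\leq \sqrt{1+|y|^2}-|y|+|x|\leq 1+|x|$, which is precisely what will convert a decay of the form $e^{-r\sqrt R}$ into the exponential of the weight $\Phi^\SL_\sigma$.

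The core of the proof is the estimate
\[
\cK(x,y)\,\lesssim\, r^{-\frac{d+1}2-\sigma}\,e^{-r\sqrt R}.
\]
To obtain it I would substitute $t=\tfrac{r}{2\sqrt R}\,u$, which normalizes the phase: the exponent becomes $-\tfrac{r\sqrt R}2\big(u+\tfrac1u\big)$, and a short computation gives
\[
\cK(x,y)=(4\pi)^{-d/2}\Big(\tfrac{2\sqrt R}{r}\Big)^{\frac d2+\sigma}\int_0^\infty e^{-\lambda(u+1/u)}\,\frac{du}{u^{\frac d2+1+\sigma}},\qquad \lambda=\tfrac{r\sqrt R}2.
\]
Using $u+\tfrac1u=2+(\sqrt u-1/\sqrt u)^2$ to factor out $e^{-2\lambda}=e^{-r\sqrt R}$, it then suffices to show that
\[
\int_0^\infty e^{-\lambda(\sqrt u-1/\sqrt u)^2}\,\frac{du}{u^{\frac d2+1+\sigma}}\,\lesssim\,\lambda^{-1/2}
\]
for $\lambda\geq \sqrt R/2$ (recall $r\geq1$). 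The contribution of $u\in[\tfrac12,2]$ is $\lesssim\int e^{-c\lambda(u-1)^2}\,du\lesssim\lambda^{-1/2}$ by the Gaussian integral, while on the complementary region $(\sqrt u-1/\sqrt u)^2$ is bounded below and grows (beating the blow-up of $u^{-\frac d2-1-\sigma}$ as $u\to0$), so that part is $\lesssim e^{-c\lambda}\lesssim\lambda^{-1/2}$. Combining these yields the displayed bound, since $r^{-\frac d2-\sigma}\lambda^{-1/2}\approx r^{-\frac{d+1}2-\sigma}$. Equivalently, one may recognize the $t$-integral as $2\,(r^2/4R)^{-d/4-\sigma/2}K_{d/2+\sigma}(r\sqrt R)$ and invoke the asymptotics $K_\nu(z)\lesssim z^{-1/2}e^{-z}$, valid for $z\geq\sqrt R$.

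Finally I would assemble the pieces: the geometric remarks give $r^{-\frac{d+1}2-\sigma}\approx(1+|y|)^{-\frac{d+1}2-\sigma}$ and $e^{-r\sqrt R}\leq e^{\sqrt R(1+|x|)}\,e^{-\sqrt{R(1+|y|^2)}}$, so the core estimate becomes
\[
\cK(x,y)\,\lesssim\, e^{\sqrt R(1+|x|)}\,\frac{e^{-\sqrt{R(1+|y|^2)}}}{(1+|y|)^{\frac{d+1}2+\sigma}}\,=\,c(x)\,\Phi^\SL_\sigma(y),
\]
as claimed. The main obstacle is exactly the core estimate: the crude bound $e^{-Rt-r^2/(4t)}\leq e^{-r\sqrt R}$ (from $Rt+\tfrac{r^2}{4t}\geq r\sqrt R$) only produces the exponential factor and loses the polynomial gain, so one genuinely needs the Laplace/stationary-phase prefactor $\lambda^{-1/2}$ (equivalently, the $z^{-1/2}$ in the large-argument asymptotics of $K_\nu$) to recover the exponent $\tfrac{d+1}2+\sigma$ of $\Phi^\SL_\sigma$. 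Everything else is elementary bookkeeping that uses $|y|\geq2\max\{|x|,1\}$ to pass freely between $r$ and $1+|y|$.
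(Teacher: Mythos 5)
Your proof is correct, and it follows essentially the same route as the paper: both reduce $\cK(x,y)$ to a one-dimensional Bessel-type integral in the time variable, extract the large-argument decay $z^{-1/2}e^{-z}$, and then use the hypothesis $|y|\geq 2\max\{|x|,1\}$ to trade $e^{-\sqrt R\,|x-y|}$ for $c(x)\,e^{-\sqrt{R(1+|y|^2)}}$ and $|x-y|$ for $1+|y|$. The only real difference is how the key asymptotic bound is established. The paper changes variables $s=|x-y|^2/(4t)$ to write
\[
\cK(x,y)=c\,\frac{F_{\sigma+d/2}\big(\sqrt R\,|x-y|\big)}{|x-y|^{d+2\sigma}},
\qquad F_\nu(z)=\int_0^\infty e^{-s}e^{-\frac{z^2}{4s}}s^{\nu-1}\,ds,
\]
quotes the asymptotics $F_\nu(z)\lesssim(1+z)^{\nu-\frac12}e^{-z}$ from Lebedev's book, and refers to \cite[(5.6)]{GHSTV} for the final bookkeeping. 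You instead normalize the phase with $t=\tfrac{r}{2\sqrt R}u$ and prove the needed decay by hand via Laplace's method (Gaussian contribution of size $\lambda^{-1/2}$ near the critical point $u=1$, exponentially small contribution elsewhere, the factor $e^{-\lambda/(4u)}$ killing the blow-up of $u^{-\frac d2-1-\sigma}$ as $u\to0$), and you carry out the geometric bookkeeping explicitly. So your argument is self-contained where the paper leans on two citations, while the mathematical content is identical: it is exactly the stationary-phase prefactor $\lambda^{-1/2}$ (equivalently the $z^{-1/2}$ in $K_\nu(z)\lesssim z^{-1/2}e^{-z}$, which you also point out) that upgrades the crude bound $e^{-r\sqrt R}$ to the full weight decay $(1+|y|)^{-\frac{d+1}2-\sigma}e^{-\sqrt{R(1+|y|^2)}}$, and your resulting estimate matches the paper's bound $\big(1+\sqrt R\,|x-y|\big)^{\sigma+\frac{d-1}2}|x-y|^{-d-2\sigma}e^{-\sqrt R\,|x-y|}$ in the relevant range.
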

\begin{proof}
Note from \eqref{hR} and \eqref{cKR} that
\Be\label{aux_cKR}
\cK(x,y)=(4\pi)^{-d/2}\,\int_0^\infty e^{-tR}e^{-\frac{|x-y|^2}{4t}}\,
\frac{dt}{t^{1+\sigma+\frac d2}}.\Ee
For $\nu>0$, consider the special function
\[
F_\nu(z):=\int_0^\infty e^{-s}e^{-\frac{z^2}{4s}}\,s^{\nu-1}\,ds\,\lesssim (1+z)^{\nu-\frac12}\,e^{-z},\quad  z>0,
\]
where the inequality follows from  the asymptotics of the integral; see e.g. \cite[p. 136]{leb}.
If we change variables $s=|x-y|^2/(4t)$ in \eqref{aux_cKR} we obtain that
\[
\cK(x,y)  = \,c \,\frac{F_{\sigma+ d/2}\big(\sqrt{R}\,|x-y|\big)}{|x-y|^{2\sigma+d}}\,
\lesssim \, \frac{\big(1+\sqrt R\,|x-y|\big)^{\sigma+\frac{d-1}2}}{|x-y|^{2\sigma+d}}\,e^{-\sqrt R\,|x-y|}. 
%\label{aux_cKF}
\]
If we now assume that $|y|\geq 2\max\{|x|,1\}$, the right side is easily seen to be controlled by
$c(x)\,\Phi^\SL_\sigma(y)$; see e.g. \cite[(5.6)]{GHSTV} and subsequent lines for a detailed argument.
\end{proof}

We can now state the corresponding theorem for the operator $\SL=-\Dt+R$.

 \begin{theorem}\label{th3}
 Let $\SL=-\Dt+R$ with $R>0$ fixed. Let $\sigma\in(0,1)$ and $\Phi^\SL_{\sigma}(y)$ be as in \eqref{PhiR}.
 Suppose that
 \[
f\in L_1(\Phi^\SL_{\sigma})\mand f\in\Dsig \quad\mbox{at some $x_0\in\SRd$}.
 \]
 Then $(-\Dt +R)^{\sigma}(x_0)$ is well defined in the limiting sense of \eqref{def_lim}, and as the absolutely convergent integral in \eqref{def_Ls}, and both definitions agree.
 \end{theorem}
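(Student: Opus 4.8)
The plan is to follow the scheme used for Theorem \ref{th1} in \S\ref{S_4.3}, but to replace the regular–positive–eigenvector device of \S\ref{S_4.2} by a direct computation, since it is precisely there that $\SL=-\Dt+R$ behaves differently. Since \eqref{pkL} holds for $\SL$ with $\Phi^\SL_\sigma$ by \cite[\S5.2]{GHSTV}, the standing hypothesis \eqref{pk2} of \S\ref{pL} is satisfied, so Proposition \ref{mainPr} reduces the whole statement to checking that $x_0\in\cA_f(\SL^\sigma)$, i.e.
\[
\int_0^\infty\big|e^{-t\SL}f(x_0)-f(x_0)\big|\,\frac{dt}{t^{1+\sigma}}<\infty .
\]
Once this is established, both definitions \eqref{def_lim} and \eqref{def_Ls} exist and agree automatically.

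The only genuinely new point is that $\SL$ has purely continuous spectrum $[R,\infty)$ and no $L_2$ eigenfunctions, so the natural candidate $\psi\equiv1$ (which solves $\SL\psi=R\psi$) is \emph{not} in $\Dom(\SL)$ and Lemma \ref{L_psi} is unavailable. I would bypass it by using that the convolution kernel \eqref{hR} has explicit total mass $\int_{\SRd}h_t(x_0,y)\,dy=e^{-tR}$. This lets me write
\[
e^{-t\SL}f(x_0)-f(x_0)=\int_{\SRd}h_t(x_0,y)\big[f(y)-f(x_0)\big]\,dy+\big(e^{-tR}-1\big)f(x_0),
\]
where the contribution of the last term is controlled by $|f(x_0)|\int_0^\infty|e^{-tR}-1|\,\frac{dt}{t^{1+\sigma}}<\infty$, via the elementary bound $|e^{-tR}-1|\le\min\{Rt,2\}$ exactly as in the proof of Lemma \ref{L_psi} (here $f(x_0)$ is finite, being part of the smoothness hypothesis).

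It then remains to bound $\int_0^\infty\big|\int_{\SRd}h_t(x_0,y)[f(y)-f(x_0)]\,dy\big|\,\frac{dt}{t^{1+\sigma}}$. Choosing $\dt=3\max\{|x_0|,1\}$ (so that $|y-x_0|\ge\dt$ forces $|y|\ge2\max\{|x_0|,1\}$), I split the inner integral into $\{|y-x_0|<\dt\}$ and $\{|y-x_0|\ge\dt\}$. On the far region, Tonelli and the notation \eqref{cKR} give the bound $\int_{|y-x_0|\ge\dt}|f(y)-f(x_0)|\,\cK(x_0,y)\,dy$, and Lemma \ref{L_KR} yields $\cK(x_0,y)\le c(x_0)\Phi^\SL_\sigma(y)$ there, so this piece is $\lesssim|f(x_0)|+\int_{\SRd}|f(y)|\Phi^\SL_\sigma(y)\,dy<\infty$ (using $\int\Phi^\SL_\sigma<\infty$ and $f\in L_1(\Phi^\SL_\sigma)$). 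The near region is exactly $\int_0^\infty|\cI_\dt f(x_0,t)|\,\frac{dt}{t^{1+\sigma}}$ from \eqref{Idtf}, which I would handle through Proposition \ref{Kernel prop.} with $A=\infty$: the size condition \eqref{C.Tamano} follows from the computation in Lemma \ref{L_KR}, namely $\int_0^\infty h_t(x_0,x_0+y)\frac{dt}{t^{1+\sigma}}=c\,F_{\sigma+d/2}(\sqrt R|y|)/|y|^{d+2\sigma}\lesssim|y|^{-(d+2\sigma)}$ for $|y|\le\dt$, since $F_{\sigma+d/2}$ is bounded near $0$; and the smoothness condition \eqref{C.Suavidad} holds trivially, because $h_t(x_0,x_0+y)=e^{-tR}W_t(y)$ is even in $y$, so its integrand vanishes. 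By Remark \ref{convolution kernel}, the weaker smoothness hypothesis $f\in\Dsig$ (rather than $\Dsigs$) therefore suffices to give $x_0\in\cA^{\rm loc}_f(\SL^\sigma)$ and hence finiteness of the near piece.

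Essentially everything reduces to facts already proved, and the kernel being of convolution type simplifies matters, since \eqref{C.Suavidad} is free and the strict smoothness class is not needed. The one conceptual hurdle—and the step I expect to require the most care—is the absence of an $L_2$ eigenvector for $\SL$, which is what forces the replacement of Lemma \ref{L_psi} by the explicit mass identity $\int h_t(x_0,y)\,dy=e^{-tR}$; everything else is a routine adaptation of the Hermite argument.
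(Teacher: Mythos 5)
Your proposal is correct and follows essentially the same route as the paper: reduction via Proposition \ref{mainPr}, the split at $\dt=3\max\{|x_0|,1\}$, Lemma \ref{L_KR} for the far region, and Proposition \ref{Kernel prop.} with $A=\infty$ together with Remark \ref{convolution kernel} for the near region (so that only $f\in\Dsig$ is needed). The single point where you depart from the paper is the treatment of the constant function: the paper simply observes that $\psi\equiv1$ \emph{is} a regular positive eigenvector of $\SL$ (with $\la=R$) and invokes Lemma \ref{L_psi} and the decomposition following \eqref{aux_J}, whereas you declare Lemma \ref{L_psi} unavailable because $1\notin L_2(\SRd)$ and substitute the mass identity $\int_{\SRd}h_t(x_0,y)\,dy=e^{-tR}$. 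In substance these are the same computation: the proof of Lemma \ref{L_psi} uses only the pointwise identity $e^{-t\SL}\psi=e^{-tR}\psi$, which for $\psi\equiv1$ is exactly your mass identity, and your bound $|e^{-tR}-1|\leq\min\{Rt,2\}$ is the one appearing there. Your literal reading of Definition \ref{def_eig} (which requires $\psi\in\Dom(\SL)\subset L_2$) is the more careful one, so your phrasing repairs a harmless imprecision; but the ``conceptual hurdle'' you anticipate is not one the paper's argument actually stumbles on, since nothing in the proof uses $\psi\in L_2$.
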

 \begin{proof}
 By Proposition \ref{mainPr} we must show that $x=x_0\in\cA_f(\SL^\sigma)$.
Observe that $\psi(y)\equiv1$ is a regular positive eigenvector for $\SL$, according to Definition \ref{def_eig}. 
So, by Lemma \ref{L_psi} and the inequalities following \eqref{aux_J} (applied with $A=\infty$), if suffices to show that
 \Beas
J_1 & = & \int_0^\infty \Big|\big(e^{-t\SL}f\big)(x)-f(x)\big(e^{-t\SL}\psi\big)(x)\Big|\,\frac{dt}{t^{1+\sigma}}\\
& =&   \int_0^\infty\Big|\int_{\SRd}h_t(x,y)\big[f(y)-f(x)\big]\,dy\Big|\,\frac{dt}{t^{1+\sigma}}<\infty.
 \Eeas
We let $\dt=3\max\{|x|,1\}$, and as before, split the inner integral into the regions
$\{|y-x|<\dt\}$ and $\{|y-x|\geq \dt\}%\subset \big\{|y|\geq 3\max\{|x|,1\}\big\}
$. So, using the notation for $\cI_\dt f(x,t)$ in \eqref{Idtf} we have
\[
J_1 \,\leq \,  \int_0^\infty\big|\cI_\delta f(x,t)\big|\,\frac{dt}{t^{1+\sigma}}\,+\,J_1^* 
\]
where
\Beas
J_{1}^*  & = &  \int_0^\infty\int_{|y-x|\geq\dt }h_t(x,y)\big|f(y)-f(x)\big|\,dy\,\tfrac{dt}{t^{1+\sigma}}\\
& \leq & 
 \int_{|y|\geq\max\{|x|,1\}
 %\max\{|x|,1\}
 }\,\big|f(y)-f(x)\big|\,\cK(x,y)\,dy,
\Eeas
with $\cK(x,y)$ as in \eqref{cKR}. By Lemma \ref{L_KR}, 
\[
J^*_1\,\leq\, c(x)\,\int_{\SRd}\big(|f(x)|+|f(y)|\big)\,\Phi^\SL_\sigma(y)\,dy,
\]
which is a finite expression. So we have reduced matters to show that
\[ 
\int_0^\infty\big|\cI_\delta f(x,t)\big|\,\frac{dt}{t^{1+\sigma}}<\infty.
\]
But under the smoothness assumption that $f\in\Dx$, this is a consequence of Proposition \ref{Kernel prop.} (setting $A=\infty$), since the kernel 
$h_t(x,y)$ trivially satisfies \eqref{C.Tamano} (by Remark \ref{R_Wt}) and \eqref{C.Suavidad} (whose left hand side is identically 0; see Remark \ref{convolution kernel}). Finally observe, also by Remark \ref{convolution kernel}, that only the weaker smoothness condition $f\in\Dx$ is used, due to the convolution structure of the kernel $h_t(x,y)$.
 \end{proof}

\section{Smoothness conditions}\label{S_smooth}

In this section we give some examples to illustrate the smoothness conditions
from \S1.
Recall that, for $\al\in(0,2)$, a locally integrable function $f\in\Dal$  if
\[
\int_{|h|\leq\dt} \frac{|f(x_0+h)+f(x_0-h)-2f(x_0)|}{|h|^{d+\al}}\,dh\,<\,\infty,
\]
for some $\dt>0$ (hence for all $\dt>0$).
Also, $f\in\Dals$ if 
\[
f\in\Dal\mand \int_{|h|\leq\dt} \frac{|f(x_0+h)-f(x_0-h)|}{|h|^{d+\al-3}}\,dh\,<\,\infty.
\]
Observe that if $f$ is bounded near $x_0$, this last condition is redundant, so $\al$-smooth and strictly $\al$-smooth agree in this case. 
The two classes also coincide if $d+\al-3\leq 0$, that is 
\Be
\Dal=\Dals, \quad \mbox{if $d=1$, or $d=2$ and $\al\in(0,1]$.}
\label{Deq}
\Ee
In other cases, the classes are different, as shown by the example in \eqref{gx} below. 
Finally, note that strict $\al$-smoothness can also be characterized as follows.

\begin{lemma}
Let $\al\in(0,2)$. Then    $f\in\Dals$ if and only if
\Be\label{Daa}
f\in\Dal\mand \int_{|h|\leq\dt} \frac{|f(x_0+h)-f(x_0)|}{|h|^{d+\al-3}}\,dh\,<\,\infty.
\Ee
\end{lemma}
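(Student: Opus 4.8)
The plan is to rephrase both conditions in terms of the single function $g(h):=f(x_0+h)-f(x_0)$ together with its even and odd parts. As already observed in the text, one has $f(x_0+h)+f(x_0-h)-2f(x_0)=2g_{\rm even}(h)$ and $f(x_0+h)-f(x_0-h)=2g_{\rm odd}(h)$. Hence, writing $B=\{|h|\le\dt\}$, membership $f\in\Dal$ is (up to the harmless factor $2$) the statement $\int_B |g_{\rm even}(h)|\,|h|^{-(d+\al)}\,dh<\infty$; the extra condition defining $\Dals$ is $\int_B |g_{\rm odd}(h)|\,|h|^{-(d+\al-3)}\,dh<\infty$; and the extra condition in \eqref{Daa} is $\int_B |g(h)|\,|h|^{-(d+\al-3)}\,dh<\infty$. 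Since both classes already contain the requirement $f\in\Dal$, it suffices to prove the equivalence of these last two integral conditions under the standing assumption $f\in\Dal$.

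Two elementary facts will drive the argument. First, since $B$ is symmetric and the weight $|h|^{-\beta}$ is radial, the change of variables $h\mapsto-h$ gives $\int_B |g(-h)|\,|h|^{-\beta}\,dh=\int_B |g(h)|\,|h|^{-\beta}\,dh$ for every $\beta$. Second, on $B$ one has the pointwise weight comparison $|h|^{-(d+\al-3)}=|h|^{3}\,|h|^{-(d+\al)}\le \dt^{3}\,|h|^{-(d+\al)}$.

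For the implication $\Dals\Rightarrow\eqref{Daa}$, I would use $|g|\le |g_{\rm even}|+|g_{\rm odd}|$, so that $\int_B |g|\,|h|^{-(d+\al-3)}$ splits into the even and odd contributions; the even term is bounded by $\dt^3\int_B |g_{\rm even}|\,|h|^{-(d+\al)}<\infty$ via the weight comparison and $f\in\Dal$, while the odd term is finite by the definition of $\Dals$. For the reverse implication $\eqref{Daa}\Rightarrow\Dals$, I would bound the odd part by $|g_{\rm odd}(h)|\le\tfrac12\bigl(|g(h)|+|g(-h)|\bigr)$ and invoke the symmetry of $B$ to get $\int_B |g_{\rm odd}|\,|h|^{-(d+\al-3)}\le\int_B |g|\,|h|^{-(d+\al-3)}<\infty$; combined with the assumed $f\in\Dal$ this yields $f\in\Dals$.

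The only point requiring a little care is the asymmetry between the two homogeneities $d+\al$ and $d+\al-3$: the even part of $g$ is controlled only in the more singular weight $|h|^{-(d+\al)}$ supplied by $\Dal$, and it is precisely the three extra powers gained in the $\Dals$-integral, together with the boundedness of $|h|^3$ on the fixed ball $B$, that transfer this control to the weaker weight $|h|^{-(d+\al-3)}$. When $d+\al-3\le0$ all these integrals are automatically finite for locally integrable $g$, so the equivalence is trivial and consistent with \eqref{Deq}.
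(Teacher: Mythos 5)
Your proposal is correct and follows essentially the same route as the paper: your bound $|g_{\rm odd}(h)|\le\tfrac12\bigl(|g(h)|+|g(-h)|\bigr)$ is the paper's inequality $|f(x_0+h)-f(x_0-h)|\le|f(x_0+h)-f(x_0)|+|f(x_0-h)-f(x_0)|$ for the direction \eqref{Daa}$\Rightarrow\Dals$, and your splitting $|g|\le|g_{\rm even}|+|g_{\rm odd}|$ is the paper's identity $2\bigl(f(x_0+h)-f(x_0)\bigr)=\triangle^1_hf(x_0)+\triangle^2_hf(x_0)$ for the converse. The only difference is presentational: you make explicit the even/odd decomposition and the weight comparison $|h|^{-(d+\al-3)}\le\dt^{3}|h|^{-(d+\al)}$ on the ball, steps the paper leaves implicit.
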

\begin{proof}
 The implication ``$\Leftarrow$''  is obvious since 
 \[
|f(x_0+h)-f(x_0-h)|\leq |f(x_0+h)-f(x_0)|+|f(x_0-h)-f(x_0)|.
 \]
For the converse implication ``$\Rightarrow$'' note that
\[
2\big(f(x_0+h)-f(x_0)\big) =\Big[f(x_0+h)-f(x_0-h)\Big]+\Big[f(x_0+h)+f(x_0-h)-2f(x_0)\Big].
\]
\end{proof}

We next collect a few further elementary observations.

\

\Benu%[leftmargin=*]
\item If $f$ is odd about $x_0$ and $f(x_0)=0$, then $f\in\Dal$, for all $\al\in(0,2)$. For instance, if $\ga\in[0,d)$ then
\Be
\label{fx}
f(x)=\sign(x\cdot\be_1)/|x|^\ga\quad\mbox{if $x\not=0$},\quad f(0)=0,
\Ee
belongs to $\Da(0)$ for all $0<\al<2$, even though it is discontinuous there. However, by \eqref{Daa}, $f\in\Das(0)$ only if 
additionally $\al\in(0,3-\ga)$. In particular, the function %$g(x)=f(x-x_0)$ with 
\Be
\label{gx}
g(x)=\frac{\sign((x-x_0)\cdot \be_1)}{|x-x_0|^{3-\al}}\quad\mbox{if $x\not=x_0$},\quad g(x_0)=0,
\Ee
which is locally integrable if $d+\al-3>0$, shows that $\Dals\subsetneq \Dal$ in the complementary range of \eqref{Deq}. 

\item There exists a function $f\in \Dals$, for all $\al\in(0,2)$, but which is discontinuous and unbounded at $x_0$. Indeed, 
if $d\geq2$ (and $x_0=0$), the function $f$ defined in \eqref{fx} with parameter $\ga=1$ has this property.
If $d=1$, one may take any (locally integrable) odd function unbounded near $x_0$. 
%If $d=1$ we may consider 
%\[
%f(x)=\sum_{j=1}^\infty a_j\chi_{[2^{-j}, 2^{-j}+\dt_j]}(x), \quad x>0,
%\]
%extended as an odd function with $f(0)=0$. One can select $a_j>0$ and $\dt_j\in(0,2^{-j})$ such that \eqref{Daa} holds for all $\al\in(0,2)$. Indeed,
%\[
%\int_0^1\frac{|f(h)|}{|h|^\al}\,dh\leq \sum_{j=1}^\infty a_j\,\dt_j\,2^{j\al},
%\]
%which is finite if $a_j=j$ and $\dt_j=2^{-2j}$. So, $f\in\Das(0)$, but $\limsup_{h\to0^+}f(h)=\infty$.

\item If $f\in\Lip_\beta(x_0)$ for some $\beta\in(0,2]$ then $f\in \Dals$ for all $\al<\beta$. Here, $f\in\Lip_\beta(x_0)$, if $\beta\in(0,1]$, means that
\[
|f(x_0+h)
-f(x_0)|\leq c\,|h|^\beta, \quad \forall\,|h|\leq\dt,
\]
for some $c,\dt>0$.
If $\beta\in(1,2]$, it means that $f$ is differentiable at $x_0$ and 
\[
\big|f(x_0+h)
-f(x_0)-\nabla f(x_0)\cdot h\big|\leq c\,|h|^\beta, \quad \forall\,|h|\leq\dt.
\]
Indeed, in either case it is clear that  $f\in\Lip_\beta(x_0)$ implies 
\[
|\triangle_h^2 f (x_0)|=
\big|f(x_0+h)
-2f(x_0)+f(x_0-h)\big|\,\leq\, 2c\,|h|^\beta,
\]
and 
\[
|\triangle_h^1 f (x_0)|=\big|f(x_0+h)-f(x_0-h)\big|\,\leq\, c'\, |h|^{\min\{\beta, 1\}},
\]
which in turn implies  $f\in\Dals$, for all $\al<\beta$. 

\item \label{4} The following examples relate $\Lip_\beta(x_0)$ and $\Dal$ when $\beta=\al$:
\Beas
& & f(x)=|x-x_0|^\al\in\Lip_\al(x_0)\setminus \Dal, \quad \forall\,\al\in(0,2)\\
& & g(x)=\sign\big((x-x_0)\cdot\be_1\big)\,|x-x_0|^\al\in \Lip_\al(x_0)\cap\Dals, \quad \forall\;\al\in(0,2).
\Eeas

%\item The following property attempts to explain the last two examples:
%\[
%\mbox{if $f\in \Dal$ and}\quad \exists\;\la^\pm=\lim_{h\to0^\pm}\frac{f(x_0+h)-f(x_0)}{\sign(h)\,|h|^\al}\implies \la^+=\la^-.
%\]
%Indeed, if this was not the case then for some $\dt>0$ it would hold
%\[
%\Big|\frac{f(x_0+h)-f(x_0)}{h^\al}-\frac{f(x_0-h)-f(x_0)}{-h^\al}\Big|>\tfrac12\,|\la^+-\la^-|,
%\quad\forall\, h\in(0,\dt).
%\]
%But this would imply
%\[
%\int_0^\dt\frac{|\triangle_h^2 f (x_0)|}{h^\al}\,\frac{dh}h\,\geq\,\tfrac12\,|\la^+-\la^-|\,\int_0^\dt\frac{dh}h=\,\infty.
%\]
%
%\item When $\al\in[1,2)$ the last property implies that if $f\in\Dal$ and there exist $f'(x_0^\pm)$, then 
%necessarily $f'(x_0^+)=f'(x_0^-)$. 
%

\item We mention an example relating the above smoothness conditions at a point $x_0$ and the existence 
of $(-\Dt)^\frac\al2f(x_0)$, as defined in \eqref{Dt}. Consider the two functions $f,g$, defined as in point \eqref{4} above but 
additionally multiplied by a smooth cut-off $\phi(|x-x_0|)$, where $\phi\in C^\infty_c(\SR)$ with $\phi\equiv 1$ in $[-1,1]$. 
Then, it is easily seen that 
\[
(-\Dt)^\frac\al2f(x_0)=-\infty \quad \mbox{but} \quad (-\Dt)^\frac\al2g(x_0)=0.
\] 
So in general, $f\in\Lip_{\al}(x_0)$ is not enough to define pointwise fractional powers, $L^\frac{\al}2f(x_0)$, 
justifying the search for a different condition such as $f\in\Dal$ or $f\in\Dals$.

\item When $L$ is the Hermite operator in \eqref{Hm}, one can show that the function $g(x)$ defined in \eqref{gx},
when  additionally multiplied by a smooth cut-off $\phi(|x-x_0|)$ supported near the point $x_0=\be_1$, has the property
\[
L^\frac\al2 g(x_0)=\infty.
\] 
Thus, when $x_0\not=0$, in Theorem \ref{th1} one cannot replace the assumption $f\in\Dsigs$ by the weaker condition $f\in \Dsig$
(except of course when the two classes coincide; see \eqref{Deq}). This example also shows that there are compactly supported $g$ such that
\[
(-\Dt)^\frac\al2 g(x_0)=0\quad \mbox{but}\quad L^\frac\al2 g(x_0)=\infty
\]
(the latter in the sense of Definition \ref{def_Af}).
\Eenu

\subsection*{Acknowledgements}
We wish to thank the referee for useful comments which have led to an improved version of this paper.

 \end{document}